\documentclass[12pt]{amsart}
\usepackage{amssymb,latexsym}
\usepackage{amsfonts}
\usepackage{amsmath}
\usepackage{algorithm}
\usepackage{algpseudocode}
\usepackage{verbatim}
\usepackage{graphicx}
\usepackage[colorlinks,linkcolor=blue,anchorcolor=blue,citecolor=blue]{hyperref}

\newcommand\C{{\mathbb C}}

\newcommand\Q{{\mathbb Q}}
\newcommand\R{{\mathbb R}}
\newcommand\Z{{\mathbb Z}}

\newcommand\N{{\mathbb N}}

\newcommand\h{\mathrm{h}}
\newcommand\al{\alpha}
\newcommand\be{\beta}
\newcommand\rem{\mathrm{rem}}

\newtheorem{theorem}{Theorem}[section]
\newtheorem{lemma}[theorem]{Lemma}
\newtheorem{conjecture}[theorem]{Conjecture}

\theoremstyle{definition}

\theoremstyle{remark}

\numberwithin{equation}{section}



\begin{document}

\title[Counting and testing dominant polynomials]{Counting and testing dominant polynomials}


\author{Art\= uras Dubickas}
\address{Department of Mathematics and Informatics, Vilnius University, Naugarduko 24,
LT-03225 Vilnius, Lithuania}
\email{arturas.dubickas@mif.vu.lt}

\author{Min Sha}
\address{School of Mathematics and Statistics, University of New South Wales,
 Sydney NSW 2052, Australia}
\email{shamin2010@gmail.com}

\subjclass[2010]{Primary 11C08; Secondary 11B37, 11R06}



\keywords{Dominant polynomial, linear recurrence sequence, Sturm's theorem, Bistritz stability criterion}

\begin{abstract}
In this paper, we concentrate on counting and testing dominant polynomials with integer coefficients. A polynomial is called {\em dominant} if
it has a simple root whose modulus is strictly greater than the moduli of its remaining roots. In particular, our results imply that the probability that the dominant root assumption holds for a random monic polynomial with integer coefficients tends to 1 in some setting. However, for arbitrary integer polynomials it does not tend to 1. For instance, the proportion of dominant quadratic integer polynomials of height $H$ among all quadratic integer polynomials tends to $(41+6 \log 2)/72$ as $H \to \infty$.
Finally, we will design some algorithms to test whether a given polynomial with integer coefficients is dominant or not without finding the polynomial roots.
\end{abstract}

\maketitle

\section{Introduction}\label{introduction}

Consider
$$
f(X)=a_0X^n+a_1X^{n-1}+\cdots+a_n\in \C[X]
$$
of degree $n\ge 2$. Let $\al_1,\al_2,\ldots,\al_n$ be the roots of $f$. If there exists one $\al_i$ such that $|\al_i|>|\al_j|$ for each $j \ne i$, we call $f$ \emph{dominant}, and $\al_i$ is called the \emph{dominant root} of $f$ (note that $\al_i$ must be real if $f(X) \in \R[X]$).  Dominant polynomials arise from various backgrounds (see, for instance, the motivation and the results given in \cite{Ak0,Ak1,Ak2}; one can also mention, e.g.,  linear recurrence sequences).

Recall that every linear recurrence sequence of complex numbers $s_0,s_1,s_2,\dots$ of order $n\ge 2$ is defined by the linear relation
\begin{equation}\label{sequence}
s_{k+n}=a_1s_{k+n-1}+\cdots+a_ns_k \quad (k=0,1,2,\dots),
\end{equation}
where $a_1,\dots,a_n\in \C$, $a_n\ne 0$ and $s_j \ne 0$ for at least one
 $j$ in the range $0 \le j \le n-1$.
The characteristic polynomial of this linear recurrence sequence is
$$
f(X)=X^n-a_1X^{n-1}-\cdots-a_n \in \C[X].
$$

The linear recurrence sequences with dominant characteristic polynomial, which is the so-called \textit{dominant root assumption}, are often much easier to deal with, especially, when considering Diophantine properties of linear recurrence sequences. Let us consider \textit{Pisot's conjecture} (Hadamard Quotient Theorem) as an example. Pisot's conjecture, which was proved by van der Poorten \cite{Poorten}, asserts that if the quotient $s_n/t_n$ of two linear recurrence sequences $\{s_n\}_{n\in \N}$ and $\{t_n\}_{n\in \N}$ is an integer for any $n\in \N$, then $\{s_n/t_n\}_{n\in \N}$ is also a linear recurrence sequence. Corvaja and  Zannier \cite[Theorem 1]{Corvaja1998} went further and generalized this conjecture to the case when $s_n/t_n$ is an integer infinitely often in some setting by using the subspace theorem and under the dominant root assumption. Later, in \cite[Corollary 1]{Corvaja2002} they removed the dominant root assumption.

In the first part of this paper, we consider how often the dominant root assumption holds for linear recurrence sequences. By counting dominant monic integer polynomials of fixed degree $n$ and of height bounded by $H$, we find that for fixed $n$, if in \eqref{sequence} we choose $a_1,\ldots,a_n$ as rational integers, the probability that the dominant root assumption holds tends to 1 as $H\to \infty$. Combining with \cite[Theorem 1.1]{Dubickas2014}, we see that almost every randomly generated linear recurrence sequence is non-degenerate and has a dominant root, that is, it is exactly what we usually prefer it to be.

In a similar way, we also evaluate the number of dominant
(not necessarily monic) integer polynomials of fixed degree and bounded height.

To state our results
we first define the set $S_n(H)$ of dominant monic integer polynomials of degree $n\ge 2$ and of height at most $H$, that is,
\begin{equation*}
\begin{split}
S_n(H) =  \{f(X) = &X^n +a_1X^{n-1} + \cdots + a_n\in \Z[X]~:\\
&f\ \text{is dominant}, \  |a_i| \le H, \ i =1, \ldots, n\}.
\end{split}
\end{equation*}
Similarly, we define
\begin{equation*}
\begin{split}
S_n^{*}(H) =  \{f(X) = &a_0X^n +a_1X^{n-1} + \cdots + a_n\in \Z[X]~:\\
&f\ \text{is dominant},a_0\ne 0, \  |a_i| \le H, \ i =0,1, \ldots, n\}.
\end{split}
\end{equation*}
Then, put $D_n(H)=|S_n(H)|$ and $D_n^{*}(H)=|S_n^{*}(H)|$.

Below, we shall use the Landau symbol $O$ and the Vinogradov symbol $\ll$. Recall that the assertions $U=O(V)$ and $U \ll V$ are both equivalent to the inequality $|U|\le CV$ with some constant $C>0$. In this paper, without special indication, the constants implied in the symbols $O, \ll$ only depend on the degree $n$; moreover, all these constants, except for some constants in Section \ref{complexity}, can be effectively computed. In the sequel, we always assume that $H$ is a positive integer (greater than $1$ if there is the factor $\log H$ in the corresponding formula), and $n$ is an integer greater than 1.

To determine how often dominant integer polynomials occur, we need to consider the asymptotic behavior of $D_n(H)$ and that of $D_n^{*}(H)$. First, we present a simple asymptotic formula for $D_n(H)$.

\begin{theorem}\label{Dn}
For any integer $n\ge 2$, we have
$$\lim_{H\to \infty}D_n(H)/(2H)^n=1.$$
\end{theorem}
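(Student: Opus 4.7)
The plan is to pair the trivial upper bound $D_n(H) \le (2H+1)^n$ with a matching lower bound $D_n(H) \ge (2H+1)^n - O(H^{n-1/2})$, which forces $D_n(H)/(2H)^n \to 1$. The lower bound will follow if every monic integer polynomial $f(X) = X^n + a_1X^{n-1} + \cdots + a_n$ with $|a_i| \le H$ and $|a_1| > 2\sqrt{nH}$ is automatically dominant, since the polynomials failing this last condition number at most $(4\sqrt{nH}+1)(2H+1)^{n-1} = O(H^{n-1/2})$.

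I would establish the dominance criterion by applying Rouch\'e's theorem on the circle $|X| = R$ with $R = \sqrt{nH}$. Writing $f(X) = a_1 X^{n-1} + h(X)$ where $h(X) = X^n + a_2 X^{n-2} + \cdots + a_n$, the triangle inequality yields on this circle
$$|h(X)| \le R^n + nHR^{n-2} = 2(nH)^{n/2} \quad \text{and} \quad |a_1 X^{n-1}| = |a_1|(nH)^{(n-1)/2},$$
so the hypothesis $|a_1| > 2\sqrt{nH}$ forces $|a_1 X^{n-1}| > |h(X)|$ throughout. Hence $f$ has the same number of zeros inside $|X| < R$ as the monomial $a_1 X^{n-1}$, namely $n-1$ counted with multiplicity, leaving exactly one zero of modulus greater than $R$.

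This outer zero must be simple, since the inner and outer multiplicities sum to $n$, and real, since a non-real outer zero would bring its complex conjugate along as a second outer zero; because it strictly dominates every interior zero in modulus, $f$ is indeed dominant. Combining the two bounds and dividing by $(2H)^n$ gives the theorem. The only genuine calculation is the calibration $R = \sqrt{nH}$, obtained by minimizing $R + nH/R$ so as to balance $|a_1|R^{n-1}$ against $R^n + nHR^{n-2}$; I do not anticipate a serious obstacle beyond the book-keeping to check reality and simplicity of the single outer zero.
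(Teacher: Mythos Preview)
Your proof is correct, and the counting skeleton is identical to the paper's: show that every monic integer polynomial of height at most $H$ with $|a_1|$ exceeding a constant times $\sqrt{H}$ is dominant, then note that the complementary set has size $O(H^{n-1/2})$. The difference is in the key lemma establishing dominance. The paper (Lemma~3.2) argues by contradiction through the Mahler measure: if $f$ is not dominant then at least two roots sit on the maximal circle $|z|=R$, so $R^2 \le M(f)$; combining this with $|a_1|\le nR$ and $M(f)\le\sqrt{n+1}\,H(f)$ yields $|a_1|\le n(n+1)^{1/4}H^{1/2}$. Your Rouch\'e argument is more elementary and more explicit, producing a concrete separating radius $R=\sqrt{nH}$ and requiring no reference to the Mahler measure. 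What the paper's formulation buys is uniformity in the leading coefficient: Lemma~3.2 is stated for arbitrary $a_0$ and is reused verbatim in the proofs of Theorems~1.2 and~1.4, whereas your version is tailored to the monic case (though it could be adapted without difficulty).
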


Theorem \ref{Dn} says that the proportion of dominant monic integer polynomials of degree $n$ and of height at most $H$ among all the monic integer polynomials of degree $n$ and of height at most $H$ (there are $(2H+1)^n$ of these) tends to 1 as $H\to \infty$. Roughly speaking, the dominant monic integer polynomials occur with a probability tending to 1. Moreover, the proof of Theorem \ref{Dn} also implies an error term of this formula.

We remark that the total number of real roots of a random polynomial of degree $n$ (if the coefficients are independent standard normals) is only $\frac{2}{\pi} \log n+c$ as $n \to \infty$, where $c$ is an absolute constant (see, e.g., \cite{Ed}). That is, a random polynomial is expected to have much more non-real roots. So, Theorem \ref{Dn} is a bit surprising.

Moreover, for $0<\varepsilon\le 1$ we define the following set
\begin{equation*}
\begin{split}
S_{n,\varepsilon}(H) =  \{&f(X) = a_0X^n +a_1X^{n-1} + \cdots + a_n\in \Z[X]~:\\
&f\ \text{is dominant},0<|a_0| \le H^{1-\varepsilon}, |a_i| \le H, i =1, \ldots, n\},
\end{split}
\end{equation*}
and put $D_{n,\varepsilon}(H)=|S_{n,\varepsilon}(H)|$. Then, we can get a similar asymptotic result.

\begin{theorem}\label{Dnep}
For each $\varepsilon$ satisfying $0<\varepsilon\le 1$ and each integer $n\ge 2$, we have
$$\lim_{H\to \infty}\frac{D_{n,\varepsilon}(H)}{2H^{1-\varepsilon}(2H)^n}=1.$$
\end{theorem}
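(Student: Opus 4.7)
The plan is to imitate the proof of Theorem \ref{Dn} for the monic case, adding an outer summation over the nonzero leading coefficient $a_0$. The ambient box $\{(a_0,a_1,\ldots,a_n)\in\Z^{n+1}: 0<|a_0|\le H^{1-\varepsilon},\,|a_i|\le H\text{ for }1\le i\le n\}$ contains exactly $2\lfloor H^{1-\varepsilon}\rfloor (2H+1)^n$ points, which is asymptotic to $2H^{1-\varepsilon}(2H)^n$. Hence it suffices to bound the number of non-dominant $f$ inside this box by $o(H^{n+1-\varepsilon})$.

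The key input is a Rouch\'e-type sufficient condition for dominance. Since $f$ and $f/a_0 = X^n + (a_1/a_0)X^{n-1}+\cdots+a_n/a_0$ share their roots, $f$ is dominant iff $f/a_0$ is. Writing $b_i = a_i/a_0$ and $M = H/|a_0|\ge\max_{i\ge 2}|b_i|$, one compares $|X^n+b_1X^{n-1}| = |X|^{n-1}|X+b_1|$ with $|b_2X^{n-2}+\cdots+b_n|$ on the two circles $|X|=|b_1|/2$ and $|X|=2|b_1|$. A short calculation (using $||X|-|b_1||\ge|b_1|/2$ on the inner circle and the geometric sum bound for the lower terms) shows that both Rouch\'e comparisons hold provided $|b_1|>2\sqrt{2M}$, giving exactly one root of $f/a_0$ in the annulus $|b_1|/2\le|X|<2|b_1|$ and the remaining $n-1$ strictly inside. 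Translating back, $f$ is dominant whenever $|a_1|>2\sqrt{2H|a_0|}$.

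It remains to count the complement. For each fixed $a_0$ with $0<|a_0|\le H^{1-\varepsilon}$, the number of integer tuples $(a_1,\ldots,a_n)\in[-H,H]^n$ failing the criterion, i.e., with $|a_1|\le 2\sqrt{2H|a_0|}$, is at most $(4\sqrt{2H|a_0|}+1)(2H+1)^{n-1}$. Summing over $a_0$ and using $\sum_{k=1}^{N}\sqrt{k}\ll N^{3/2}$ with $N=\lfloor H^{1-\varepsilon}\rfloor$ yields
\[
\#\{\text{non-dominant } f\in S_{n,\varepsilon}(H)\}\ \ll\ H^{n-1}\cdot H^{1/2}\cdot H^{3(1-\varepsilon)/2}\ =\ H^{n+1-3\varepsilon/2}.
\]
Since $\varepsilon>0$, we have $n+1-3\varepsilon/2<n+1-\varepsilon$, so this bound is $o(H^{n+1-\varepsilon})$, and dividing by $2H^{1-\varepsilon}(2H)^n$ gives the claimed limit $1$.

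The only real obstacle is the exponent arithmetic: we must arrange the Rouch\'e threshold so that it depends on $a_0$ and $H$ only (not on $a_2,\ldots,a_n$), so that after summing over $a_0$ we gain a factor $\sum\sqrt{|a_0|}\asymp H^{3(1-\varepsilon)/2}$ rather than $H^{n(1-\varepsilon)}$. This is precisely why the hypothesis $\varepsilon>0$ is needed; at $\varepsilon=0$ the gain $\varepsilon/2$ in the exponent disappears and the method breaks down, consistent with the non-trivial behaviour exhibited by Theorem~1.2-type examples (quadratics of height $H$) mentioned in the abstract.
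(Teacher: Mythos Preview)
Your proof is correct and follows essentially the same architecture as the paper: both establish a sufficient condition for dominance of the shape $|a_1|>c\sqrt{|a_0|\,H}$ (with $c$ depending only on $n$), then bound the complement by summing $\sqrt{|a_0|}$ over $|a_0|\le H^{1-\varepsilon}$ to get $O(H^{n+1-3\varepsilon/2})=o(H^{n+1-\varepsilon})$. The paper obtains its criterion via the Mahler measure inequality (their Lemma~\ref{family2}: if $f$ is non-dominant then two roots share the maximal modulus $R$, so $|a_0|R^2\le M(f)\le\sqrt{n+1}\,H(f)$, and $|a_1|\le n|a_0|R$ yields $|a_1|\le n(n+1)^{1/4}|a_0|^{1/2}H(f)^{1/2}$), whereas you derive it directly by Rouch\'e on two circles. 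Your route is slightly more hands-on but equally elementary; the Mahler measure argument is a one-line contrapositive and gives a cleaner constant.

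One small point: your Rouch\'e estimate on the inner circle $|X|=|b_1|/2$ silently uses a geometric-series bound that needs $|b_1|/2$ bounded away from $1$ (e.g.\ $|b_1|\ge 4$) in addition to $|b_1|>2\sqrt{2M}$. Since $M=H/|a_0|\ge H^{\varepsilon}\to\infty$, this side condition is eventually implied by the main one, and in any case the extra exceptional set with $|a_1|<4|a_0|$ contributes only $O(H^{n+1-2\varepsilon})$, which is harmless. With that caveat the argument is complete and matches the paper's.
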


Selecting $\varepsilon=1$ in $S_{n,\varepsilon}(H)$, we obtain $a_0 \in \{-1,1\}$.
Hence,
$D_{n,1}(H)=2D_n(H)$, since half of the polynomials in $S_{n,1}(H)$ have the leading coefficient $1$ and half $-1$. Thus, Theorem~\ref{Dnep} with $\varepsilon=1$ implies Theorem~\ref{Dn}.

However, the situation for $D_n^{*}(H)$ is quite different. We can get
an explicit asymptotic formula for $D_2^{*}(H)$, but for $n \ge 3$ we can
only get lower and upper bounds.

\begin{theorem}\label{D2*}
We have
$$\lim_{H\to \infty}D_2^*(H)/(2H)^3=\frac{41+6\log 2}{72}\approx 0.6272.$$
\end{theorem}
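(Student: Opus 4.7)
My plan has four steps: (i) characterize dominance for a quadratic exactly; (ii) use two obvious symmetries to reduce the count by a factor of $4$; (iii) for each pair $(a_0,a_1)$ count the admissible $a_2$; and (iv) evaluate the resulting double sum via a Riemann-sum approximation.

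For step (i), I write $\Delta = a_1^2 - 4 a_0 a_2$: the case $\Delta<0$ gives complex conjugate roots of equal modulus, $\Delta=0$ gives a non-simple root, and $\Delta>0$ gives real roots $(-a_1\pm\sqrt{\Delta})/(2a_0)$ whose absolute values agree iff $a_1=0$. So $f$ is dominant iff $a_1\ne 0$ and $a_1^2>4a_0a_2$. For step (ii), I use the involutions $(a_0,a_1,a_2)\mapsto(-a_0,-a_1,-a_2)$ and $(a_0,a_1,a_2)\mapsto(a_0,-a_1,a_2)$ (corresponding to $f\mapsto -f$ and $f(X)\mapsto f(-X)$); both preserve dominance, and since $a_0,a_1\ne 0$ they act freely on $S_2^*(H)$, giving $D_2^*(H)=4N(H)$ where
$$ N(H) := \#\{(a_0,a_1,a_2)\in\Z^3 : 1\le a_0,a_1\le H,\ |a_2|\le H,\ 4a_0a_2<a_1^2\}. $$

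For step (iii), with $1\le a_0,a_1\le H$ fixed, the number of integers $a_2\in[-H,H]$ with $a_2<a_1^2/(4a_0)$ equals $2H+1$ when $a_1>2\sqrt{a_0H}$ and equals $H+\lfloor a_1^2/(4a_0)\rfloor+1$ otherwise. In both regimes this can be written uniformly as $H\cdot h(a_0/H,a_1/H)+O(1)$, where $h(x,y):=\min\bigl(2,\,1+y^2/(4x)\bigr)$ is continuous and piecewise smooth on $(0,1]^2$. For step (iv), I recognize $N(H)=H\sum_{a_0,a_1}h(a_0/H,a_1/H)+O(H^2)$ as a Riemann sum, and expect $N(H)=H^3\int_0^1\!\int_0^1 h(x,y)\,dx\,dy + o(H^3)$. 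Splitting the integrand according to whether $h=2$ (on $y>2\sqrt{x}$) or $h=1+y^2/(4x)$ (on $y\le 2\sqrt{x}$), the integral reduces to $2I_1+I_2+I_3$ with
\begin{align*}
I_1 &= \int_0^{1/4}(1-2\sqrt{x})\,dx = \tfrac{1}{12},\\
I_2 &= \int_0^1\min(1,2\sqrt{x})\,dx = \tfrac{11}{12},\\
I_3 &= \int_0^1\frac{\min(1,2\sqrt{x})^3}{12\,x}\,dx = \tfrac{1}{18}+\tfrac{\log 2}{6}.
\end{align*}
Summing with common denominator $36$ gives $2I_1+I_2+I_3=(6+33+2+6\log 2)/36=(41+6\log 2)/36$, hence $N(H)=(41+6\log 2)H^3/36+o(H^3)$ and $D_2^*(H)/(2H)^3\to(41+6\log 2)/72$.

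The part I expect to require the most care is the error analysis in step (iv): the function $y^2/(4x)$ has a non-Lipschitz singularity at $x=0$, so I would handle the $a_0=1$ strip separately (it contributes only $O(H^{3/2})$) before applying standard Riemann-sum bounds on $x\ge 1/H$; the $O(1)$ floor errors from step (iii) aggregate to $O(H^2)$. Both error sources are comfortably $o(H^3)$ and therefore disappear in the limit. Everything else is routine calculus.
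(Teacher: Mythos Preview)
Your proof is correct and arrives at the same constant. The overall strategy---characterize dominance by $a_1\ne 0$ and $\Delta>0$, then pass to an integral approximation---matches the paper's, but the organization differs in a way worth noting. The paper fixes the pair $(a_0,a_2)$ and sums over $a_1$, splitting first according to the sign of $a_0a_2$: when $a_0a_2<0$ every polynomial is dominant (contributing a clean $4H^3$), and only the case $a_0a_2>0$ requires an integral, namely $\iint_D (2H-4\sqrt{xy})\,dx\,dy$ over the region $2\sqrt{xy}\le H$, which evaluates to $\tfrac{5+6\log 2}{18}H^3$. You instead fix $(a_0,a_1)$ with $a_0,a_1>0$ (via the two involutions) and sum over $a_2$, arriving at the single integral $\int_0^1\int_0^1 \min\bigl(2,\,1+y^2/(4x)\bigr)\,dx\,dy$. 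Your route is more uniform (no sign case-split) at the cost of an integrand that is merely bounded rather than smooth near $x=0$; the paper's route avoids that singularity entirely because its integrand $2H-4\sqrt{xy}$ is globally Lipschitz.

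One small remark on your step (iv): since $h(x,y)\le 2$ everywhere, $h$ is Riemann integrable on $[0,1]^2$ and the Riemann sum converges automatically; you do not actually need a separate treatment of the strip $a_0=1$ to obtain $o(H^3)$. (Your stated bound $O(H^{3/2})$ for that strip is a bit optimistic---the strip contributes $O(H^2)$---but this is immaterial.) The floor errors from step (iii) indeed aggregate to $O(H^2)$, and everything goes through.
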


\begin{theorem}\label{Dn*}
For any integer $n\ge 2$, we have
\begin{equation}
\liminf_{H\to \infty}D_n^*(H)/(2H)^{n+1}  \ge \frac{1}{3n^2 \sqrt{n+1}}
\notag
\end{equation}
and
\begin{equation}
\limsup_{H\to \infty}D_n^*(H)/(2H)^{n+1}\le
\left\{\begin{array}{ll}
 1-1/(3\cdot 2^{3n/2-1})&\quad\text{if $n$ is even},\\
 1-1/(3 \cdot 2^{(3n+1)/2}) &\quad\text{if $n$ is odd}.\\
\end{array}\right.
\notag
\end{equation}
\end{theorem}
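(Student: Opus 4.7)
The strategy is to prove the lower and upper bounds separately by exhibiting explicit subfamilies of $S_n^*(H)$. Since $|S_n^*(H)|=2H(2H+1)^n\sim(2H)^{n+1}$, both bounds concern the limiting proportion of dominant (or non-dominant) polynomials. For the lower bound I plan to construct a subfamily of provably dominant polynomials of density at least $1/(3n^2\sqrt{n+1})$, and for the upper bound a subfamily of provably non-dominant polynomials of density at least $1/(3\cdot 2^{3n/2-1})$ when $n$ is even and $1/(3\cdot 2^{(3n+1)/2})$ when $n$ is odd.

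For the lower bound my approach is to establish a Rouch\'e-type sufficient condition for dominance and then count the integer tuples satisfying it. On the circle $|X|=r:=|a_1|/(2|a_0|)$ the binomial $a_0X^n+a_1X^{n-1}=X^{n-1}(a_0X+a_1)$ has modulus at least $r^{n-1}|a_1|/2$, while the remainder $a_2X^{n-2}+\cdots+a_n$ has modulus at most $r^{n-2}\sum_{i=2}^n|a_i|$ whenever $r\ge 1$. The coefficient inequality $|a_1|^2>4|a_0|\sum_{i=2}^n|a_i|$ therefore suffices for Rouch\'e to apply, forcing exactly one root of $f$ to lie outside the disk $|X|\le r$, hence producing a simple dominant root near $-a_1/a_0$. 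Counting the integer tuples $(a_0,\ldots,a_n)\in\Z^{n+1}$ with $a_0\ne 0$, $|a_i|\le H$, and satisfying the above inequality is a Riemann-sum estimate: after dividing by $(2H)^{n+1}$ it converges to the Lebesgue volume of the region in $[-1,1]^{n+1}$ cut out by $|x_1|^2>4|x_0|\sum_{i\ge 2}|x_i|$. The factor $\sqrt{n+1}$ in the target constant is consistent with a Cauchy--Schwarz step replacing $\sum|x_i|$ by a weighted $\ell^2$-norm of the remaining coefficients, while the $n^2$ emerges from the resulting polytope volume computation.

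For the upper bound I plan to exhibit non-dominant polynomials of the form $f(X)=(a_0X^2+a_1X+a_2)\,h(X)$, where the leading quadratic satisfies $a_1^2<4a_0a_2$ (so its roots are complex conjugates of common modulus $R:=\sqrt{a_2/a_0}$) and $h(X)$ is a monic integer polynomial of degree $n-2$ whose roots all lie strictly inside $|X|<R$. For every such $f$ the two conjugate roots are jointly largest in modulus, so $f$ is not dominant. The count then breaks into three pieces: a positive-density count of triples $(a_0,a_1,a_2)$ with $a_1^2<4a_0a_2$ and $|a_i|\le H$ (closely related to Theorem~\ref{D2*}); for each such triple, an estimate of the number of admissible $h$'s whose Cauchy root bound is less than $R$, giving on the order of $R^{n-2}$ choices; and verification that the convolution producing $f$'s coefficients keeps them in $[-H,H]$. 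The parity dichotomy reflects whether $n-2$ is even or odd, i.e., whether $h$ must carry an additional real root, which accounts for the factor-of-$8$ discrepancy between the even-$n$ and odd-$n$ denominators.

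The principal obstacle is matching the precise numerical constants. For the lower bound, the naive Rouch\'e condition above is probably not quite optimal; extracting exactly $1/(3n^2\sqrt{n+1})$ will require comparing $|a_1|$ to a weighted $\ell^2$-norm of the remaining coefficients and performing the polytope volume calculation carefully. For the upper bound the chief technical difficulty is controlling the $\ell^\infty$-convolution constraint $|f|_\infty\le H$ on the product $(a_0X^2+a_1X+a_2)\cdot h(X)$ without logarithmic loss, which is what produces the sharp exponential $2^{3n/2}$ in the denominator.
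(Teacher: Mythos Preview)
Your proposal is a plan rather than a proof, and it diverges from the paper's argument on both halves; in each case the paper's route is considerably simpler and your route has unfilled gaps.

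\textbf{Lower bound.} The paper does not use Rouch\'e. It uses the one-line criterion of Lemma~\ref{family2}: if $|a_1|^2>n^2\sqrt{n+1}\,|a_0|\,H(f)$ then $f$ is dominant. This comes from the Mahler-measure inequality $M(f)\le\sqrt{n+1}\,H(f)$ (if $f$ were not dominant, two roots would sit on the top circle, forcing $|a_1|\le n|a_0|R\le n\sqrt{|a_0|M(f)}$). With $c=n(n+1)^{1/4}$ one then counts pairs $(a_0,a_1)$ with $|a_1|>c\sqrt{|a_0|H}$; the remaining $n-1$ coefficients are free, and a single integral $\int_0^{H/c^2}(H-c\sqrt{Hx})\,dx=H^2/(3c^2)$ produces exactly $(2H)^{n+1}/(3n^2\sqrt{n+1})$. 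Your Rouch\'e condition $|a_1|^2>4|a_0|\sum_{i\ge2}|a_i|$ is a perfectly valid dominance criterion, but the region it cuts out in $[-1,1]^{n+1}$ has no reason to have volume $1/(3n^2\sqrt{n+1})$; your Cauchy--Schwarz speculation does not recover that factor (the $\sqrt{n+1}$ is an artifact of the height--Mahler inequality, not of any $\ell^2$ step). Since only a lower bound is claimed, your larger region would actually give a \emph{better} constant---but you never compute it.

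\textbf{Upper bound.} Here your approach is genuinely different and substantially harder than necessary. The paper does not factor anything: it writes down, in Lemmas~\ref{family5} and~\ref{family6}, an explicit family of non-dominant polynomials specified purely by sign patterns and coefficient inequalities (for $n$ even: $a_0X^n-a_1X^{n-1}+a_2X^{n-2}+\cdots+a_n$ with all $a_i>0$, $a_0\ge a_1$, $a_2\ge a_3,\ldots,a_{n-2}\ge a_{n-1}$, and $a_n\ge a_0$). A two-line argument shows all real roots lie in $(-1,1)$ while some root has modulus $\ge1$, so the top root is non-real and $f$ is not dominant. The count is then the volume of a product of simplices, namely $\tfrac{H^3}{6}\cdot(\tfrac{H^2}{2})^{n/2-1}$, and multiplying by $4$ for $\pm f(\pm X)$ gives the stated constant immediately. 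No convolutions, no height-of-product issues. Your factorised construction $(a_0X^2+a_1X+a_2)h(X)$ faces exactly the obstacle you name---controlling $H(f)$ for a product---and you offer no mechanism to overcome it; moreover your parity heuristic is off (the ratio of the odd to the even denominator is $2^{3/2}$, not $8$).
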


It seems very likely that
the limit $\lim_{H\to \infty}D_n^*(H)/(2H)^{n+1}$
exists; see Conjecture \ref{conjecture1}. Theorem~\ref{Dn*} tells us that, contrary to the monic case, the proportion of dominant integer polynomials is positive but does not tend to 1.

After some preparations we shall prove
Theorems~\ref{Dn}-\ref{Dn*} in Section~\ref{trys}.
Then, in the second part of this paper (Section~\ref{keturi}), we apply Sturm's theorem and the Bistritz stability criterion to design algorithms on testing whether a given integer polynomial is dominant or not. By realizing these algorithms, we obtain some numerical results, which are consistent with the above theorems. Based on the numerical results, we conjecture that at least half of integer polynomials are dominant; see Conjecture \ref{conjecture2}.

\section{Preliminaries}

Given a polynomial $$f(X)=a_0X^n+a_1X^{n-1}+\cdots+a_n=a_0 (X-\al_1)\cdots (X-\al_n) \in \C[X],$$ where $a_0 \ne 0$, its {\it height}
is defined by $H(f)=\max_{0 \leq j \leq n} |a_j|$,
and its {\it Mahler measure} by
$$
M(f)=|a_0| \prod_{j=1}^n \max\{1,|\al_j|\}.
$$

For each $f(X) \in \C[X]$ of degree $n$, these quantities are related by the following well-known inequality
\begin{equation}\label{Mahler}
2^{-n}H(f)  \leq M(f) \leq \sqrt{n+1}H(f),
\end{equation}
for instance, see \cite[(3.12)]{Waldschmidt2000}.

For an algebraic number $\al \in \overline{\Q}$ of degree $d$, its Mahler measure $M(\al)$ is the Mahler measure of its minimal polynomial $f$ over $\Z$. Then, for the \emph{(Weil) absolute logarithmic height} $\h(\al)$ of $\al$, we have
\begin{equation}\label{height}
\h(\al)=\frac{\log M(\al)}{d}.
\end{equation}

Some special forms of polynomials will play an important role here.
The one below is nontrivial. It was obtained by Ferguson \cite{Ferguson1997}; see also a previous result of Boyd \cite{boyd}.

\begin{lemma}\label{Ferguson}
If $f(X) \in \Z[X]$ is an irreducible polynomial which has exactly $m$ roots on a circle $|z|=c>0$, at least one of which is real, then one has $f(X)=g(X^m)$, where the polynomial $g(X) \in \Z[X]$ has at most one real root on any circle in the plane with center at the origin.
\end{lemma}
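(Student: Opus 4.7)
The plan is to first establish the rotational structure of the $m$ roots on the circle, then propagate it to all roots of $f$ by Galois-equivariance, and finally deduce the asserted property of $g$ from a maximality argument. Let $\alpha_1=\epsilon c$ with $\epsilon\in\{\pm 1\}$ denote the given real root on the circle, and let $\alpha_2,\ldots,\alpha_m$ be the remaining roots of $f$ on $|z|=c$. Setting $\zeta_i:=\alpha_i/\alpha_1$, one has $|\zeta_i|=1$, and since $f\in\R[X]$ complex conjugation permutes the roots on the circle and satisfies $\overline{\alpha_i}=c^2/\alpha_i=\alpha_1^2/\alpha_i$, giving $\overline{\zeta_i}=\zeta_i^{-1}$. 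In particular, the set $\{\zeta_1,\ldots,\zeta_m\}$ is automatically closed under inversion.

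The main step, and the one I anticipate to be the chief obstacle, is to show that $\{\zeta_1,\ldots,\zeta_m\}$ coincides with the group $\mu_m$ of $m$-th roots of unity. The governing tool is Kronecker's theorem (an algebraic integer all of whose $\Q$-conjugates lie in the closed unit disc is zero or a root of unity), but Kronecker does not apply to $\zeta_i$ directly: it is typically only a quotient of algebraic integers, and its $\Q$-orbit may wander off the unit circle. The strategy is to construct auxiliary algebraic integers from suitable symmetric products of the $\alpha_i$ on the circle, arranged so that extraneous moduli cancel and the full $\Q$-orbit of the auxiliary element remains on the unit circle; Kronecker then forces this element, and hence each $\zeta_i$, to be a root of unity. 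Combined with closure under inversion and the cardinality $m$, one identifies $\{\zeta_i\}$ with $\mu_m$. This is essentially the delicate content of Boyd's and Ferguson's original arguments, which I would follow rather than attempt to shortcut.

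Once the $m$ roots on the circle are known to be $\alpha_1\zeta_m^k$ for $0\le k<m$, irreducibility spreads the structure to all roots: for any root $\beta$ of $f$, choose $\sigma\in\gal(\overline{\Q}/\Q)$ with $\sigma(\alpha_1)=\beta$; then the elements $\sigma(\alpha_1\zeta_m^k)=\beta\,\sigma(\zeta_m)^k$ are roots of $f$, and since $\sigma(\zeta_m)$ is again a primitive $m$-th root of unity, the whole set $\{\beta\zeta:\zeta\in\mu_m\}$ lies in the root set. Hence the root set of $f$ is $\mu_m$-invariant with all orbits of size exactly $m$, and grouping each orbit into a factor $X^m-\alpha^m$ yields $f(X)=g(X^m)$ with $g(Y):=a_0\prod_{\text{orbits}}(Y-\alpha^m)\in\Z[Y]$, integrality being automatic from $g(X^m)=f(X)\in\Z[X]$. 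The hypothesis forces this $m$ to be maximal among all decompositions $f=g'(X^{m'})$, since a larger $m'$ would yield at least $m'$ roots of $f$ on $|z|=c$ (the $m'$-th roots of the real root $\alpha_1^{m'}$ of $g'$), contradicting the assumption of exactly $m$ such roots. Finally, irreducibility of $f$ forces $g$ irreducible, so if $g$ had two real roots $\pm r$ on some circle, applying the decomposition already proved to $g$ on that circle would give $g(Y)=h(Y^k)$ with $k\ge 2$, hence $f(X)=h(X^{mk})$ with $mk>m$, contradicting the maximality of $m$; this yields the asserted property of $g$.
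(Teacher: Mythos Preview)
The paper does not actually prove this lemma: it is stated with a citation to Ferguson (and Boyd) and used as a black box thereafter. Your proposal is therefore not competing with any argument in the paper; rather, it is a sketch of the original Boyd--Ferguson proof, and as such it is accurate in its broad strokes. You correctly isolate the crux---showing that the ratios $\zeta_i=\alpha_i/\alpha_1$ form exactly $\mu_m$---and you are right that this is where the genuine work lies and that Kronecker's theorem cannot be applied naively to the $\zeta_i$. The propagation by Galois action, the orbit decomposition yielding $f(X)=g(X^m)$, the irreducibility of $g$, and the maximality argument ruling out two real roots of $g$ on a common circle are all handled correctly.

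One remark on presentation: since you explicitly say you ``would follow rather than attempt to shortcut'' Boyd's and Ferguson's construction of the auxiliary algebraic integers, your write-up is (by design) a proof outline rather than a self-contained proof. That is perfectly consistent with how the paper itself treats the lemma, but if you intend this as a replacement for the citation you would need to fill in that step.
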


The following lemma concerning the upper bound of the moduli of roots of polynomials is a classical result due to Cauchy \cite{Cauchy1} (see also \cite[Theorem 2.5.1 and Proposition 2.5.9]{Mignotte} or \cite[Corollary 8.3.2]{Mishra}) or \cite[Theorems 1.1.2 and 1.1.3]{Prasolov}.

\begin{lemma}\label{Cauchy}
All the roots of the polynomial of degree $n \ge 1$
$$
f(X)=a_0X^n+a_1X^{n-1}+\cdots+a_n \in \C[X],
$$
where $a_0\ne 0$ and $(a_1,\dots,a_n)\ne (0,\dots,0),$
are contained in the disc $|z|\le R$, where $X=R$ is the unique positive solution of the equation
\begin{equation*}
|a_0|X^n-|a_1|X^{n-1}-\cdots-|a_{n-1}|X-|a_n|=0.
\end{equation*}
In addition, for an arbitrary non-zero root $x$ of $f$, we have
\begin{equation}\label{Cauchy bound}
\frac{\min_{0\le i \le n}|a_i|}{H(f)+\min_{0\le i \le n}|a_i|}<|x|<1+\frac{1}{|a_0|}\max\{|a_1|,\ldots,|a_{n}|\}.
\end{equation}
\end{lemma}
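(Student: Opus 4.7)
The plan is to prove the two main assertions by elementary triangle-inequality arguments, following Cauchy's classical approach. For existence and uniqueness of $R$, I would analyze
$$g(X) = |a_0|X^n - |a_1|X^{n-1} - \cdots - |a_n|.$$
Writing $g(X)/X^n = |a_0| - |a_1|/X - \cdots - |a_n|/X^n$, this is strictly increasing on $(0,\infty)$ as soon as at least one $|a_i|$ with $i \geq 1$ is nonzero (guaranteed by the hypothesis $(a_1,\ldots,a_n)\ne(0,\ldots,0)$), with limits $-\infty$ at $0^+$ and $|a_0|>0$ at $+\infty$. By the intermediate value theorem, $g$ has a unique positive root $R$, and $g(X)<0$ on $(0,R)$, $g(X)>0$ on $(R,\infty)$. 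For any root $\al$ of $f$ (the case $\al=0$ being immediate), the identity $-a_0\al^n = a_1\al^{n-1}+\cdots+a_n$ combined with the triangle inequality gives $|a_0||\al|^n \leq \sum_{i=1}^n |a_i||\al|^{n-i}$, i.e., $g(|\al|)\le 0$, and hence $|\al|\le R$ by the sign analysis.

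For the upper bound in \eqref{Cauchy bound}, set $A=\max\{|a_1|,\ldots,|a_n|\}$. If $|x|\le 1$ the bound is trivial since $A>0$; if $|x|>1$, the same triangle inequality together with the geometric sum $|x|^{n-1}+\cdots+1=(|x|^n-1)/(|x|-1)$ yields
$$|a_0||x|^n(|x|-1) \leq A(|x|^n-1) < A|x|^n,$$
so $|x|-1 < A/|a_0|$, as desired.

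For the lower bound, I would reduce to the upper bound via the reciprocal polynomial $\tilde f(X)=X^n f(1/X) = a_nX^n + a_{n-1}X^{n-1}+\cdots+a_0$. If $m:=\min_{0\le i\le n}|a_i|$ equals $0$, the bound is trivial; otherwise $a_n\ne 0$, so $\tilde f$ has degree $n$ with $1/x$ a nonzero root. Applying the upper bound already proved to $\tilde f$ gives
$$|1/x| < 1 + \frac{\max\{|a_0|,\ldots,|a_{n-1}|\}}{|a_n|} \le \frac{|a_n|+H(f)}{|a_n|},$$
so $|x| > |a_n|/(|a_n|+H(f))$. Since $t\mapsto t/(t+H(f))$ is increasing on $(0,\infty)$ and $|a_n|\ge m$, this is at least $m/(H(f)+m)$.

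No step presents a serious obstacle; the only mildly clever move is the inversion argument, whose validity hinges on the observation that $m>0$ is exactly the condition guaranteeing that $\tilde f$ has the same degree as $f$, so that the already-proved upper bound may be applied to it.
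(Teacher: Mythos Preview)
Your proof is correct. Note that the paper does not actually prove this lemma: it is quoted as a classical result of Cauchy, with references to the standard textbooks (Mignotte--\c{S}tef\u{a}nescu, Mishra, Prasolov). The argument you give is precisely the classical one found in those references: the sign analysis of $g(X)/X^n$ for existence/uniqueness of $R$, the triangle inequality plus the geometric-series trick for the upper Cauchy bound, and the passage to the reciprocal polynomial for the lower bound. One small point worth making explicit (you handle it correctly but only implicitly): when $m>0$ not only does $a_n\ne 0$ ensure $\deg\tilde f=n$, but $a_0\ne 0$ ensures that the hypothesis $(a_{n-1},\dots,a_0)\ne(0,\dots,0)$ needed to apply the already-proved upper bound to $\tilde f$ is also met.
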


This lemma will assist us in constructing a family of dominant polynomials explicitly.

For bounding the distance between two distinct roots of a complex polynomial (especially an integer polynomial), the initial work done by Mahler \cite{Mahler1964}, and then it has been studied extensively for a long time. See \cite{Budarina2013, Bugeaud2011, Bugeaud2014, Bugeaud2004, Bugeaud2010, Dubickas2013, Evertse2004} for more recent progress including some nontrivial constructions of polynomials with close roots. Usually, one needs to separate the roots of a polynomial by circles centered at these roots. However, for our purpose we also need to use separations of roots by annuli centered at the origin. So, we need to study the distance between two distinct moduli of roots of an integer polynomial.
For this,
there are two main tools that we use below.

The first one is Mahler's inequality
\cite{Mahler1964} asserting that if $\gamma$ and $\gamma'$ are two distinct roots of a separable polynomial $g(X) \in \Z[X]$ of degree $m \ge 2$ then
\begin{equation}\label{m1}
|\gamma-\gamma'|>\sqrt{3} m^{-m/2-1}M(g)^{1-m}.
\end{equation}
The following lemma is a direct consequence of \eqref{Mahler} and \eqref{m1}.

\begin{lemma}
Let $f(X)\in \Z[X]$ be a polynomial of degree $n\ge 2$, and let $\al$ and $\be$ be two distinct roots of $f$. Then, we have
\begin{equation}\label{distance0}
|\al-\be|>\sqrt{3}(n+1)^{-n-1/2}H(f)^{1-n}.
\end{equation}
\end{lemma}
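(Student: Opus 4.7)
The plan is to reduce to the separable case by passing to the squarefree part of $f$ and then to chain Mahler's separation inequality~\eqref{m1} with the height-vs.-Mahler-measure bound~\eqref{Mahler}. The lemma is only stated for a general integer polynomial of degree $n\ge 2$, and~\eqref{m1} requires separability, so this reduction is the one step that needs care; everything afterwards is a routine bookkeeping comparison of constants.

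First, I would factor $f$ in $\Z[X]$ as $f=c\,f_1^{e_1}\cdots f_r^{e_r}$, where $c\in\Z\setminus\{0\}$ and the $f_i$ are pairwise distinct primitive irreducible polynomials with positive leading coefficients, and set $g:=f_1f_2\cdots f_r$, the squarefree part of $f$. Then $g\in\Z[X]$ is separable of some degree $m$, and since $\al,\be$ are two distinct roots of $g$, we have $2\le m\le n$. By the multiplicativity of the Mahler measure together with $|c|\ge 1$ and $e_i\ge 1$,
$$
M(g)=\prod_{i=1}^r M(f_i)\le |c|\prod_{i=1}^r M(f_i)^{e_i}=M(f).
$$

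Next, I would apply~\eqref{m1} to $g$ at its distinct roots $\al,\be$ and then~\eqref{Mahler} to $f$; since $1-m<0$, the inequality $M(g)\le\sqrt{n+1}\,H(f)$ propagates in the favourable direction and gives
$$
|\al-\be|>\sqrt{3}\,m^{-m/2-1}M(g)^{1-m}\ge\sqrt{3}\,m^{-m/2-1}(n+1)^{(1-m)/2}H(f)^{1-m}.
$$

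Finally, I would compare this with the target $\sqrt{3}\,(n+1)^{-n-1/2}H(f)^{1-n}$. Since $H(f)\ge 1$ and $1-m\ge 1-n$, we have $H(f)^{1-m}\ge H(f)^{1-n}$, so the remaining task is the pure constant inequality
$$
(n+1)^{n+1-m/2}\ge m^{m/2+1}\qquad(2\le m\le n),
$$
which splits into $(n+1)^{m/2+1}\ge m^{m/2+1}$ (from $n+1>m$) together with $(n+1)^{n-m}\ge 1$ (from $n\ge m$). The main obstacle, as noted, is merely recognising that one must work with the squarefree part $g$ rather than with $f$ directly; once this is in place, the chain of estimates closes with some room to spare.
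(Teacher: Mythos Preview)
Your proof is correct and is precisely the argument the paper intends: the paper states only that the lemma is ``a direct consequence of \eqref{Mahler} and \eqref{m1}'', and your write-up fills in the details, including the passage to the squarefree part $g$ of $f$ (which the paper makes explicit only later, in the proof of Lemma~\ref{distance}) and the routine constant comparison in $m\le n$. Nothing is missing or different in spirit.
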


The second tool is a Liouville type inequality. See, e.g.,
\cite[Lemma 3.14]{Waldschmidt2000}. We use the following version
given in \cite{fel}: if $\gamma_1,\dots,\gamma_s$ are algebraic numbers with degrees $d_1,\dots,d_s$ over $\Q$ and $P(z_1,\dots,z_s)$ is a polynomial
with integer coefficients of degree $N_1,\dots,N_s $ in the variables $z_1,\dots,z_s$, respectively, then either $P(\gamma_1,\dots,\gamma_s)=0$ or
\begin{equation}\label{m2}
|P(\gamma_1,\dots,\gamma_s)| \geq L(P)^{1-\delta d} \prod_{k=1}^s M(\gamma_k)^{-\delta N_k d/d_k},
\end{equation}
where $L(P)$ is the sum of the moduli of the coefficients of $P$,  $d=[\Q(\gamma_1,\dots,\gamma_s):\Q]$ and  $\delta=1$ (resp. $\delta=1/2$) if the field $\Q(\gamma_1,\dots,\gamma_s)$ is real (resp. complex).

We first consider quadratic integer polynomials.

\begin{lemma}\label{distance1}
Let $f(X)\in\Z[X]$ be a quadratic polynomial. Suppose that $f$ has two real roots $\al$ and $\be$ such that $|\al| \ne |\be|$. Then, we have
$$
||\al|-|\be|| \ge H(f)^{-1}.
$$
\end{lemma}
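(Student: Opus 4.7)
The plan is a short case analysis driven by Vieta's formulas, exploiting the fact that certain integer quantities must be nonzero (hence at least $1$ in absolute value).

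Write $f(X)=aX^2+bX+c$ with $a,b,c\in\Z$ and $a\ne 0$, and set $H=H(f)=\max(|a|,|b|,|c|)$. By Vieta, $\alpha+\beta=-b/a$ and $\alpha\beta=c/a$, so the discriminant $\Delta:=b^2-4ac$ is a (necessarily nonnegative) integer with $|\alpha-\beta|=\sqrt{\Delta}/|a|$.

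I would split into two cases according to the signs of the roots. \emph{Case 1:} $\alpha,\beta$ have the same sign. Then $\big||\alpha|-|\beta|\big|=|\alpha-\beta|=\sqrt{\Delta}/|a|$. The hypothesis $|\alpha|\ne|\beta|$ forces $\alpha\ne\beta$, hence $\Delta\ne 0$; as $\Delta\in\Z$ this gives $\Delta\ge 1$ and therefore $\big||\alpha|-|\beta|\big|\ge 1/|a|\ge 1/H$. \emph{Case 2:} $\alpha,\beta$ have opposite signs. Then $\big||\alpha|-|\beta|\big|=|\alpha+\beta|=|b|/|a|$. If $b=0$ the roots would be $\pm\sqrt{-c/a}$, giving $|\alpha|=|\beta|$, contradicting the hypothesis; hence $b\ne 0$, so $|b|\ge 1$ and $\big||\alpha|-|\beta|\big|\ge 1/|a|\ge 1/H$.

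Combining the two cases yields the claimed inequality $\big||\alpha|-|\beta|\big|\ge H(f)^{-1}$. There is no real obstacle here beyond the clean bookkeeping of the two sign-cases; the key observation is simply that both alternatives reduce to a bound of the form $(\text{nonzero integer})/|a|$, which is automatically $\ge 1/H(f)$. (Note that the general Liouville-type inequality \eqref{m2} recalled earlier would also deliver a bound of the same shape, but in degree $2$ the elementary argument above is sharper and avoids any appeal to heavy machinery.)
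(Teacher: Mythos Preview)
Your proof is correct and follows essentially the same approach as the paper's: both split into the two cases $||\alpha|-|\beta||=|\alpha-\beta|$ (same sign) and $||\alpha|-|\beta||=|\alpha+\beta|$ (opposite signs), and in each case reduce to a bound of the form $(\text{nonzero integer})/|a|\ge 1/H(f)$ via the discriminant and Vieta respectively. The only cosmetic difference is that the paper phrases the dichotomy directly as ``$||\alpha|-|\beta||$ equals $|\alpha-\beta|$ or $|\alpha+\beta|$'', which automatically absorbs the edge case where one root is $0$; your sign-based phrasing leaves that boundary case implicit, but it is trivially handled by either branch.
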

\begin{proof}
Let $f(X)=aX^2+bX+c=a(X-\al)(X-\be)$.
Since $\al$ and $\be$ are real, we have $||\al|-|\be|| =|\al - \be|$, or $|\al + \be|$. In the first case, we obtain
$$H(f)^2 |\al-\be|^2 \ge |a|^2 |\al-\be|^2=|b^2-4ac| \ge 1,$$
which implies the desired result. In the second case, $\al \ne -\be$, so
$b \ne 0$. Thus, $|\al+\be|=|-b/a| \ge 1/|a| \ge 1/H(f)$ again.
\end{proof}

Now, we consider the general case.

\begin{lemma}\label{distance}
Let $f(X)\in\Z[X]$ be a polynomial of degree $n\ge 2$, and let $\alpha$ and $\beta$ be two roots of $f$ satisfying $|\alpha| \ne |\beta|$. Then,
$$
\left||\alpha|-|\beta|\right|>
2^{n(n-1)/4}(n+1)^{-n^3/4+3n/4-3}H(f)^{-n^3/2+n^2+n/2-2}
$$
if both $\al$ and $\be$ are complex (non-real).
If, furthermore, $\al$ is real and $\be$ is complex (non-real), then
\begin{equation}\label{distance2}
\left||\alpha|-|\beta|\right| \ge 2^{-n(n-1)(n-2)/2} (n+1)^{-n(n-1)-1/2}H(f)^{-2n(n-1)-1}.
\end{equation}
Finally, if both $\al$ and $\be$ are real, then
\begin{equation}\label{distance3}
||\alpha|-|\beta||>(2n+1)^{-3n}H(f)^{2-4n}.
\end{equation}
\end{lemma}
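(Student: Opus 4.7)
The lemma splits into three cases depending on whether $\alpha,\beta$ are both non-real complex, one real and one non-real complex, or both real, and I would follow the same case split. The common strategy in the first two cases is to bound $\bigl||\alpha|^2-|\beta|^2\bigr|$ from below and then divide by the upper bound $|\alpha|+|\beta|\le 2M(f)/|a_0|\le 2\sqrt{n+1}\,H(f)$, which follows from Lemma \ref{Cauchy} combined with \eqref{Mahler}; in case 3 I would work with $|\alpha\pm\beta|$ directly, depending on the signs of $\alpha$ and $\beta$.

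For case 1, both $|\alpha|^2=\alpha\bar\alpha$ and $|\beta|^2=\beta\bar\beta$ are roots of the resolvent polynomial whose roots are the products $\alpha_i\alpha_j$ for $i<j$ among the roots of $f$. After scaling by appropriate powers of $a_0$, this becomes an integer polynomial of degree $m=n(n-1)/2$ whose Mahler measure I would bound by (essentially) $M(f)^{n-1}$, using the standard inequality $\max(1,|\alpha_i\alpha_j|)\le \max(1,|\alpha_i|)\max(1,|\alpha_j|)$ together with the fact that each root $\alpha_k$ appears in exactly $n-1$ pairs. Since $|\alpha|\ne|\beta|$ forces $|\alpha|^2\ne|\beta|^2$, applying Mahler's inequality \eqref{m1} to the squarefree part and converting $M(f)\le\sqrt{n+1}\,H(f)$ via \eqref{Mahler} should reproduce the stated $H(f)^{-n^3/2+n^2+n/2-2}$ factor, and the powers of $2$ and $n+1$ fall out after bounding $m\le (n+1)^2$.

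For case 2, I would apply Liouville's inequality \eqref{m2} to $P(z_1,z_2,z_3)=z_1^2-z_2z_3$ at the point $(\alpha,\beta,\bar\beta)$, noting that $P(\alpha,\beta,\bar\beta)=|\alpha|^2-|\beta|^2\ne 0$. Since $\alpha,\beta,\bar\beta$ are three pairwise distinct roots of $f$, successive root adjunctions give $d=[\Q(\alpha,\beta,\bar\beta):\Q]\le n(n-1)(n-2)$, and the field is complex so $\delta=1/2$. Using the crude bounds $d/d_k\le n(n-1)$ for each $k$, together with $M(\gamma_k)\le M(f)\le \sqrt{n+1}\,H(f)$, and finally dividing by $|\alpha|+|\beta|\le 2\sqrt{n+1}\,H(f)$, I would recover exactly the constants $2^{-n(n-1)(n-2)/2}$, $(n+1)^{-n(n-1)-1/2}$ and $H(f)^{-2n(n-1)-1}$ of \eqref{distance2}.

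For case 3, if $\alpha,\beta$ have the same sign then $||\alpha|-|\beta||=|\alpha-\beta|$, and \eqref{distance0} applied directly to $f$ yields a bound much stronger than \eqref{distance3}. If $\alpha,\beta$ have opposite signs then $||\alpha|-|\beta||=|\alpha+\beta|$, and the key observation is that $\alpha$ and $-\beta$ are two distinct roots of the degree-$2n$ integer polynomial $f(X)f(-X)$, whose Mahler measure is exactly $M(f)^2$ (since its roots are $\pm\alpha_i$). Applying Mahler's inequality \eqref{m1} to its squarefree part and converting $M(f)\le\sqrt{n+1}\,H(f)$ gives $|\alpha+\beta|\ge \sqrt{3}\,(2n)^{-n-1}(n+1)^{1-2n}H(f)^{2-4n}$; the bound $\eqref{distance3}$ then follows from replacing both $2n$ and $n+1$ by the uniform base $2n+1$ to collapse all factors into $(2n+1)^{-3n}$. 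The main obstacle I anticipate is the careful bookkeeping in case 1 for non-monic $f$, where the $a_0$-scaling of the resolvent polynomial must be arranged so as not to inflate its Mahler measure beyond $M(f)^{n-1}$, and in case 2 the verification that the crude bounds on $d$ and $d/d_k$ are indeed tight enough to deliver the precise constants claimed.
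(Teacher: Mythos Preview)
Your proposal matches the paper's proof essentially line for line: the same resolvent polynomial with roots $\alpha_i\alpha_j$ and Mahler measure bound $M(f)^{n-1}$ in case~1, the same Liouville-type inequality \eqref{m2} applied to $P(z_1,z_2,z_3)=z_1^2-z_2z_3$ in case~2, and the same passage through $f(X)f(-X)$ in case~3. The only cosmetic differences are that the paper treats both sign subcases of case~3 uniformly via $f(X)f(-X)$ (whereas you invoke \eqref{distance0} directly when $\alpha,\beta$ have the same sign), and the paper bounds $|\alpha|+|\beta|$ by $2M(f)$ straight from the definition of Mahler measure rather than via Lemma~\ref{Cauchy}; neither affects the argument.
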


\begin{proof}
Let us begin with the case when both $\al$ and $\be$ are real.
If $\al$ and $\be$ both have the same sign, then
$||\al|-|\be||=|\al-\be|$. If $\al$ and $\be$ have different signs,
then $||\al|-|\be||=|\al+\be|=|\al-(-\be)|$. In both cases,
$\al$ and $\be$ (or $-\be$) are the roots of the polynomial
$f(X)f(-X) \in \Z[X]$. Its separable part $g(X)$ (which is the product of the
factors of $f(X)f(-X)$
that are irreducible over $\Q$) has degree at most $2n$
and Mahler measure $M(g) \le M(f)^2$.  Clearly, $\al, \be$ and $-\be$ are the roots
of $g$. Applying Mahler's bound \eqref{m1} and
inequality \eqref{Mahler} to the polynomial $g$, we obtain
\begin{align*}
||\al|-|\be|| >\sqrt{3}(2n)^{-n-1} M(f)^{2-4n}&>(2n+1)^{-n-1}
(\sqrt{2n+1}H(f))^{2-4n}\\&=(2n+1)^{-3n}H(f)^{2-4n},
\end{align*}
as claimed.

Now, assume that $\al$ and $\be$ are both complex (non-real). Then
$n \ge 4$ and
$$2M(f)||\al|-|\be|| \ge 2\max\{|\al|,|\be|\} ||\al|-|\be||\ge ||\al|^2-|\be|^2|=|\al \bar{\al}-\be \bar{\be}|,$$
so $$||\al|-|\be|| \ge \frac{|\al \bar{\al}-\be \bar{\be}|}{2M(f)}.$$
Take a separable polynomial $f_1(X)=c_0(X-\gamma_1) \dots (X-\gamma_l) \in \Z[X]$ dividing $f(X)$ whose roots
contain $\al$ and $\be$. Observe that $\bar{\al}$ and $\bar{\be}$ are also roots of $f_1$.
Clearly, $l=\deg f_1 \le n$ and $M(f_1) \le M(f)$. Consider a separable  polynomial
$g(X) \in \Z[X]$ whose roots contain $\al \bar{\al}$ and $\be \bar{\be}$
(which is either the minimal polynomial of $\al \bar{\al}$ in $\Z[X]$ if $\be \bar{\be}$ is conjugate to $\al \bar{\al}$ or, otherwise, it is the product of the minimal polynomials of $\al \bar{\al}$ in $\Z[X]$ and that of $\be \bar{\be}$ in $\Z[X]$). It is clear that $g(X)$ divides the polynomial
$c_0^{l-1}\prod_{1 \le i < j \le l} (X-\gamma_i \gamma_j) \in \Z[X]$, so
$\deg g \le l(l-1)/2 \le n(n-1)/2$ and
\begin{align*}
M(g) &\le  |c_0|^{l-1} \prod_{1 \le i<j \le l} \max\{1,|\gamma_i \gamma_j|\} \\& \le
|c_0|^{l-1} \prod_{1 \le i<j \le l} \max\{1,|\gamma_i|\}    \max\{1,|\gamma_j|\} \\&
= |c_0|^{l-1} \prod_{k=1}^l \max\{1, |\gamma_k|\}^{l-1} = M(f_1)^{l-1} \\& \le M(f)^{n-1}.
\end{align*}
Now, as above applying Mahler's bound \eqref{m1} to the pair of roots $\al \bar{\al}, \be \bar{\be}$ of $g$ and then
inequality \eqref{Mahler} to the polynomial $g$, we obtain
\begin{align*}
||\al|-|\be|| & \ge \frac{|\al \bar{\al}-\be \bar{\be}|}{2M(f)} \\& >
\frac{\sqrt{3}}{2M(f)} \left(\frac{n(n-1)}{2}\right)^{-n(n-1)/4-1} M(f)^{(n-1)(1-n(n-1)/2)}\\& >
2^{n(n-1)/4}(n(n-1))^{-n(n-1)/4-1} M(f)^{n-2-n(n-1)^2/2}\\&>
2^{n(n-1)/4}(n+1)^{-n(n-1)/2-2}
(\sqrt{n+1}H(f))^{n-2-n(n-1)^2/2}\\
&=2^{n(n-1)/4}(n+1)^
{-n^3/4+3n/4-3}H(f)^{-n^3/2+n^2+n/2-2}.
\end{align*}

It remains to consider the case when $\al$ is real and $\be$ is complex. By Lemma \ref{distance1}, \eqref{distance2} is true when $n=2$. In the sequel, we assume that $n\ge 3$.

As above, we obtain
\begin{equation}\label{ytr}
||\al|-|\be|| \ge \frac{|\al^2-\be \bar{\be}|}{2M(f)}.
\end{equation}
In order to estimate
$|\al^2-\be \bar{\be}|$ from below, we shall apply \eqref{m2} to the polynomial $P(z_1,z_2,z_3)=z_1^2-z_2z_3$ at the point $(z_1,z_2,z_3)=(\al,\be,\bar{\be})$. Then, we have $N_1=2$, $N_2=N_3=1$, $L(P)=2$, $\delta=1/2$
and $d=[\Q(\al,\be,\bar{\be}):\Q] \le n(n-1)(n-2)$.
Also,
\begin{align*}
\frac{d}{d_1} &=\frac{[\Q(\al,\be,\bar{\be}):\Q]}{[\Q(\al):\Q]}=[\Q(\al,\be,\bar{\be}):\Q(\al)] \le [\Q(\be,\bar{\be}):\Q)] \\& \le {n(n-1)}
\end{align*}
and, similarly, $d/d_2=d/d_3 \le n(n-1)$. Thus, applying \eqref{m2}, in view
of $M(\be)=M(\bar{\be})$ we find that
$$
|\al^2-\be \bar{\be}| \ge 2^{1-n(n-1)(n-2)/2} M(\al)^{-n(n-1)}M(\be)^{-n(n-1)}.
$$
Now, from $M(\al) \le M(f)$, $M(\be) \le M(f)$ and \eqref{ytr}, we deduce
$$||\al|-|\be|| \ge 2^{-n(n-1)(n-2)/2} M(f)^{-2n(n-1)-1}.$$
By \eqref{Mahler}, $M(f)^{2n(n-1)+1} \le (n+1)^{n(n-1)+1/2}H(f)^{2n(n-1)+1} $. Hence,
$$||\al|-|\be|| \ge 2^{-n(n-1)(n-2)/2} (n+1)^{-n(n-1)-1/2}H(f)^{-2n(n-1)-1} .$$
This completes the proof of the lemma.
\end{proof}

In Lemma \ref{distance}, if $f(X)$ is irreducible and $n\ge 3$, then \eqref{distance2} can be replaced by the following
$$
\left||\alpha|-|\beta|\right| \ge 2^{-n(n-1)(n-2)/2} (n+1)^{-(n-1)(n-2)-1/2}H(f)^{-2(n-1)(n-2)-1},
$$
because $d/d_i \le (n-1)(n-2)$ for $i=1,2,3$. This will make sense for computations.

We conclude this section with the following lemma, which gives a lower bound better than \eqref{distance3} for irreducible polynomials of lower degrees (for example, of degree $n$ with $2\le n \le 11$ and of arbitrary height).

\begin{lemma}
Let $f\in\Z[X]$ be an irreducible polynomial of degree $n\ge 2$, $\alpha$ and $\beta$ two real roots of $f$.  If $|\alpha|\ne |\beta|$, then we have
$$
||\alpha|-|\beta||\ge 2^{-n(n-1)}(n+1)^{-n+1}H(f)^{-2(n-1)}.
$$
\end{lemma}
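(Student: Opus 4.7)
The plan is to apply the Liouville-type inequality \eqref{m2} rather than Mahler's inequality \eqref{m1}, since the former exploits irreducibility directly and avoids the factor-of-two blow-up in degree that appears when one works with $f(X)f(-X)$.

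First I would reduce to a single algebraic identity. Because $\alpha,\beta\in\R$ and $|\alpha|\ne|\beta|$, one has
\[
\bigl||\alpha|-|\beta|\bigr|=|\alpha-\beta|\quad\text{or}\quad\bigl||\alpha|-|\beta|\bigr|=|\alpha+\beta|,
\]
according to whether $\alpha$ and $\beta$ share a sign. In either case, choose $P(z_1,z_2)=z_1-z_2$ or $P(z_1,z_2)=z_1+z_2$ so that $|P(\alpha,\beta)|=\bigl||\alpha|-|\beta|\bigr|$. Note that $P(\alpha,\beta)\ne 0$: if $P=z_1-z_2$ this would force $\alpha=\beta$, while if $P=z_1+z_2$ it would force $|\alpha|=|\beta|$, both contradictions.

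Next I would collect the parameters needed to apply \eqref{m2} with $s=2$ and $(\gamma_1,\gamma_2)=(\alpha,\beta)$. Since $f$ is irreducible of degree $n$, it is the common minimal polynomial of $\alpha$ and $\beta$, so $d_1=d_2=n$ and $M(\alpha)=M(\beta)=M(f)$. Also $L(P)=2$ and $N_1=N_2=1$. The field $\Q(\alpha,\beta)$ is real, hence $\delta=1$, and
\[
d=[\Q(\alpha,\beta):\Q]=[\Q(\alpha):\Q]\cdot[\Q(\alpha,\beta):\Q(\alpha)]\le n(n-1),
\]
because over $\Q(\alpha)$ the polynomial $f(X)/(X-\alpha)$ has degree $n-1$ and admits $\beta$ as a root. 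Plugging these into \eqref{m2} gives
\[
\bigl||\alpha|-|\beta|\bigr|=|P(\alpha,\beta)|\ge 2^{1-n(n-1)}\,M(\alpha)^{-(n-1)}M(\beta)^{-(n-1)}=2^{1-n(n-1)}M(f)^{-2(n-1)}.
\]

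Finally I would pass from Mahler measure to height using the right-hand side of \eqref{Mahler}, namely $M(f)\le\sqrt{n+1}\,H(f)$, which yields $M(f)^{2(n-1)}\le(n+1)^{n-1}H(f)^{2(n-1)}$, hence
\[
\bigl||\alpha|-|\beta|\bigr|\ge 2^{1-n(n-1)}(n+1)^{-(n-1)}H(f)^{-2(n-1)}\ge 2^{-n(n-1)}(n+1)^{-n+1}H(f)^{-2(n-1)},
\]
as claimed. There is no serious obstacle: the essential point is the choice of the trivial bivariate polynomial $P(z_1,z_2)=z_1\pm z_2$, which allows Liouville to give the sharp exponent $-2(n-1)$ on $H(f)$; the irreducibility of $f$ enters only through the equalities $d_1=d_2=n$ and $M(\alpha)=M(\beta)=M(f)$ that make the product $M(\alpha)^{-(n-1)}M(\beta)^{-(n-1)}$ clean.
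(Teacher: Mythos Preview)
Your proof is correct and follows essentially the same strategy as the paper: both arguments apply a Liouville-type inequality, exploit irreducibility to get $d_1=d_2=n$, $M(\alpha)=M(\beta)=M(f)$, and $[\Q(\alpha,\beta):\Q]\le n(n-1)$, and finish with $M(f)\le\sqrt{n+1}\,H(f)$. The only cosmetic difference is that the paper applies the one-variable Liouville bound $|\gamma|\ge\exp(-[\Q(\gamma):\Q]\,\h(\gamma))$ to $\gamma=|\alpha|-|\beta|$ together with the height estimate $\h(x\pm y)\le\h(x)+\h(y)+\log 2$, whereas you invoke the multivariate form \eqref{m2} directly with $P(z_1,z_2)=z_1\pm z_2$; your route is marginally slicker (it even yields an extra factor of $2$ before you discard it) but the mathematical content is identical.
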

\begin{proof}
By Liouville's inequality (see \cite[(3.13)]{Waldschmidt2000}), we have
\begin{align*}
||\alpha|-|\beta||&\ge \exp\left(-[\Q(|\al|-|\be|):\Q]\h(|\al|-|\be|)\right).
\end{align*}

Note that $\al$ and $\be$ are two real algebraic numbers, we have
\begin{align*}
[\Q(|\al|-|\be|):\Q]&\le [\Q(\al,\be):\Q]\le n(n-1).
\end{align*}

In addition, combining some basic properties of the height function with \eqref{Mahler} and \eqref{height}, we have
\begin{align*}
\h(|\al|-|\be|)&\le \h(|\al|)+\h(|\be|)+\log 2\\
&\le \h(\al)+\h(\be)+\log 2\\
& =\frac{2\log M(f)}{n}+\log 2\\
& \le\frac{2}{n}\log H(f)+\frac{1}{n}\log (n+1)+\log 2.
\end{align*}

Collecting the above inequalities, we derive the desired result.
\end{proof}

\section{Counting dominant polynomials}\label{trys}

We first give several families of dominant polynomials.

\begin{lemma}\label{family1}
Let $f(X)=a_0X^n+a_1X^{n-1}+\cdots+a_n\in \Z[X]$. Suppose that $f$ is irreducible. If either $a_0>0$ and $a_i<0$ for $1\le i\le n$ or $a_0<0$ and $a_i>0$ for $1\le i\le n$, then $f$ is dominant.
\end{lemma}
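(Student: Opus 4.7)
The plan is to reduce to the case $a_0>0$ and all $a_i<0$ (the other sign pattern follows by replacing $f$ by $-f$, which has the same roots and hence the same dominance behavior), and then combine Cauchy's bound (Lemma \ref{Cauchy}) with Ferguson's structural lemma (Lemma \ref{Ferguson}) to pin down the maximum-modulus root.

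First I would observe that in this case $|a_i|=-a_i$ for $i\ge 1$ and $|a_0|=a_0$, so the auxiliary equation
\[
|a_0|X^n-|a_1|X^{n-1}-\cdots-|a_{n-1}|X-|a_n|=0
\]
appearing in Lemma \ref{Cauchy} is literally $f(X)=0$. Hence the unique positive real number $R$ that bounds the moduli of all roots of $f$ is itself a root of $f$. Since $f$ is irreducible over $\Q$ (hence separable in characteristic zero), $R$ is a simple root. What remains is to exclude the possibility that some other root has modulus exactly $R$.

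The hard part is this exclusion of a second root on $|z|=R$, and this is exactly where Lemma \ref{Ferguson} does the work. Suppose for contradiction that $f$ has $m\ge 2$ roots on the circle $|z|=R$; one of them is the real root $R$, so the hypothesis of Lemma \ref{Ferguson} is satisfied. Then there exists $g(X)\in\Z[X]$ with $f(X)=g(X^m)$. Writing $f(X)=\sum_{j=0}^{n}a_jX^{n-j}$, the identity $f(X)=g(X^m)$ forces $a_j=0$ whenever $m\nmid (n-j)$; because $m\mid n$, this means $a_j=0$ for every $j$ with $1\le j\le n$ and $m\nmid j$. In particular $a_1=0$, contradicting $a_1<0$. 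Therefore $m=1$, the root $R$ is the unique root of $f$ on its circle of largest modulus, and $f$ is dominant with dominant root $R$.

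I do not anticipate any hidden obstacle: the only subtlety is to notice that our sign hypothesis makes the Cauchy auxiliary polynomial coincide with $f$ itself (so $R$ is not merely a bound but an actual root), and to verify that the monomial-structure forced by $f(X)=g(X^m)$ is incompatible with every coefficient $a_1,\dots,a_n$ being nonzero. Both observations are short; the sign-reduction to $a_0>0,\ a_i<0$ and the separability-from-irreducibility remark handle the rest.
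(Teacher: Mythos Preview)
Your proof is correct and follows essentially the same approach as the paper: both use Lemma~\ref{Cauchy} to see that the Cauchy radius $R$ is itself a (simple) root of $f$, and then invoke Lemma~\ref{Ferguson} together with the fact that all coefficients are nonzero to rule out any other root of modulus $R$. Your write-up is more explicit than the paper's (which simply says ``by the choice of coefficients of $f$ and by Lemma~\ref{Ferguson}''), but the underlying argument is the same.
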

\begin{proof}
Since $a_0>0$ and $a_1,\dots,a_n<0$, or $a_0<0$ and $a_1,\dots,a_n>0$, by Lemma \ref{Cauchy}, $f$ has a unique positive root $R$ such that all the other roots lie in the disc $|z|\le R$. By the choice of coefficients of $f$ and by Lemma~\ref{Ferguson}, we see that $f$ is dominant.
\end{proof}

\begin{lemma}\label{family2}
Let $f(X)=a_0X^n+a_1X^{n-1}+\cdots+a_n\in \C[X]$. If $|a_1| > n(n+1)^{1/4}|a_0|^{1/2}H(f)^{1/2}$, then $f$ is dominant.
\end{lemma}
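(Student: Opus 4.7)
The plan is to argue by contradiction using only two ingredients: Vieta's formula relating $a_1/a_0$ to the sum of the roots, and the two-sided height--Mahler measure estimate \eqref{Mahler}. Write $\alpha_1,\ldots,\alpha_n$ for the roots of $f$ and set $r=\max_i|\alpha_i|$. Vieta gives $a_1/a_0=-(\alpha_1+\cdots+\alpha_n)$, hence the upper bound $|a_1|\le n|a_0|r$; on the other hand, the failure of dominance (i.e., the maximum modulus is attained by at least two of the $\alpha_i$, counted with multiplicity) together with the factorization $M(f)=|a_0|\prod_i\max\{1,|\alpha_i|\}$ will yield the lower bound $M(f)\ge |a_0|r^2$, provided we know $r\ge 1$.

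The pivotal observation is that the hypothesis itself forces $r>1$, irrespective of whether $f$ is dominant. Indeed, using $H(f)\ge|a_0|$, the hypothesis combined with Vieta gives
\[
r\;\ge\;\frac{|a_1|}{n|a_0|}\;>\;\frac{(n+1)^{1/4}H(f)^{1/2}}{|a_0|^{1/2}}\;\ge\;(n+1)^{1/4}\;>\;1.
\]
Assuming now for contradiction that $f$ is not dominant, two of the $|\alpha_i|$ equal $r$, so since $r>1$ we obtain
\[
M(f)\;\ge\;|a_0|\,r^2\;\ge\;\frac{|a_1|^2}{n^2|a_0|}.
\]
Combining this with the right-hand side of \eqref{Mahler}, namely $M(f)\le\sqrt{n+1}\,H(f)$, and solving for $|a_1|$ produces $|a_1|\le n(n+1)^{1/4}|a_0|^{1/2}H(f)^{1/2}$, in direct contradiction with the hypothesis. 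Hence $f$ must be dominant.

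There is no serious obstacle: the whole argument is essentially one chain of inequalities once the setup is in place. The only mildly delicate point is recognising that ``two of the $|\alpha_i|$ equal $r$'' translates into a useful lower bound on $M(f)$ only when $r\ge 1$ (otherwise $\prod\max\{1,|\alpha_i|\}$ swallows such coincidences); the constant $n(n+1)^{1/4}$ in the hypothesis is precisely calibrated so that this harmless case cannot arise. The lemma thereby complements Lemma \ref{family1}: instead of constraining the signs of all coefficients, it constrains the single coefficient $a_1$ in size relative to $a_0$ and $H(f)$, leaving the remaining coefficients essentially free.
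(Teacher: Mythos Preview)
Your proof is correct and follows essentially the same route as the paper's: assume non-dominance, use Vieta to get $|a_1|\le n|a_0|r$, use the two roots of modulus $r$ to bound $M(f)\ge |a_0|r^2$, and finish with $M(f)\le\sqrt{n+1}\,H(f)$. The only difference is your detour to show $r>1$, which is in fact unnecessary: when $r<1$ one has $M(f)=|a_0|$ and hence $r^2<1=M(f)/|a_0|$ trivially, so the key inequality $r^2\le M(f)/|a_0|$ holds in all cases without any appeal to the hypothesis.
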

\begin{proof}
Let $R$ be the largest modulus of the roots of $f$. Suppose that $f$ is not dominant, that is, it has at least two roots on the circle $|z|=R$. Then, by the definition of the Mahler measure, we must have $R^2\le M(f)/|a_0|$. Noticing that $|a_1|\le nR|a_0|$, and applying \eqref{Mahler}, we obtain
$$
|a_1| \le n(n+1)^{1/4}|a_0|^{1/2}H(f)^{1/2},
$$
which contradicts with our assumption.
\end{proof}

\begin{lemma}\label{family3}
Let $f(X)=(X-a)(a_0X^{n-1}+a_1X^{n-2}+\cdots+a_{n-1})\in \C[X]$ be of degree $n\ge 2$. If either $|a|\ge 2$ and $|a_0|\ge |a_i|$ for $1\le i \le n-1$ or $|a|\ge \sqrt{H(f)}+n-1$ and $|a_0|\ge 1$, then $f$ is dominant.
\end{lemma}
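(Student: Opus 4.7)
The strategy in both cases is to prove the strict inequality $|\beta| < |a|$ for every root $\beta$ of the cofactor $g(X) := a_0 X^{n-1} + a_1 X^{n-2} + \cdots + a_{n-1}$. Since the roots of $f$ are precisely $a$ together with the roots of $g$, this automatically gives $g(a) \ne 0$ (so $a$ is a simple root of $f$) as well as the uniqueness of the root of maximal modulus, which is exactly the definition of dominance. In the first case this is immediate: Cauchy's bound \eqref{Cauchy bound} applied to $g$, together with the assumption $|a_0| \ge |a_i|$ for $1 \le i \le n-1$, yields for any non-zero root $\beta$ of $g$
\begin{equation*}
|\beta| < 1 + \frac{\max_{1 \le i \le n-1}|a_i|}{|a_0|} \le 2 \le |a|,
\end{equation*}
while the root $\beta=0$, if it occurs, trivially satisfies $0 < |a|$.

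The second case requires a preparatory bound on the coefficients of $g$, which is the real work. Writing $f(X) = f_0 X^n + \cdots + f_n$, the product $f = (X-a)g$ yields $f_0 = a_0$, $f_j = a_j - a\,a_{j-1}$ for $1 \le j \le n-1$, and $f_n = -a\,a_{n-1}$. Solving this triangular system from the bottom, starting from $a_{n-1} = -f_n/a$ and iterating $a_{j-1} = (a_j - f_j)/a$, produces the closed form
\begin{equation*}
a_{n-1-k} = -\sum_{j=0}^{k} \frac{f_{n-j}}{a^{\,k+1-j}} \qquad (0 \le k \le n-1).
\end{equation*}
Since $H(f) \ge |a_0| \ge 1$ and $n \ge 2$, the hypothesis gives $|a| \ge \sqrt{H(f)} + n-1 \ge 2 > 1$, so summing a geometric series yields
\begin{equation*}
|a_i| \le H(f) \sum_{\ell=1}^{n-i} |a|^{-\ell} \le \frac{H(f)}{|a|-1}
\end{equation*}
for every $0 \le i \le n-1$.

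Substituting this bound into Cauchy's estimate for $g$, and using $|a_0| \ge 1$ together with the inequality $|a|-1 \ge \sqrt{H(f)}$ (which follows from the hypothesis and $n \ge 2$), we obtain for any non-zero root $\beta$ of $g$
\begin{equation*}
|\beta| < 1 + \frac{H(f)}{(|a|-1)|a_0|} \le 1 + \frac{H(f)}{|a|-1} \le 1 + \sqrt{H(f)} \le (n-1)+\sqrt{H(f)} \le |a|,
\end{equation*}
and the zero root of $g$, if present, is again trivial. The only non-routine step is the coefficient estimate $|a_i| \le H(f)/(|a|-1)$; it is this bound, which essentially exploits $f(a)=0$, that allows $|a|$ to be merely of order $\sqrt{H(f)}$ rather than of order $H(f)$, as a direct application of Cauchy to $f$ itself would demand.
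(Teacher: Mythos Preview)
Your proof is correct and follows essentially the same approach as the paper: in both cases you bound the roots of the cofactor $g$ via Cauchy's estimate, and in the second case you first control the $|a_i|$ through the relation $f=(X-a)g$. The only difference is cosmetic: the paper bounds the coefficients inductively as $|a_i|<\sqrt{H(f)}+n-1-i$, whereas you solve the recursion in closed form and sum a geometric series to get $|a_i|\le H(f)/(|a|-1)$; both bounds feed into the same Cauchy inequality and reach the same conclusion.
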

\begin{proof}
First, suppose that $|a|\ge 2$ and $|a_0|\ge |a_i|$ for $1\le i \le n-1$. Then, by \eqref{Cauchy bound}, the roots of $a_0X^{n-1}+a_1X^{n-2}+\cdots+a_{n-1}$ are strictly inside the circle
$$
|z|=1+\frac{1}{|a_0|}\max\{|a_1|,\ldots,|a_{n-1}|\}\le 2.
$$
So, $a$ is a dominant root of $f$, and thus $f$ is dominant.

Next, suppose that $|a_0|\ge 1$ and $|a|\ge \sqrt{H(f)}+n-1$.
Note that
$$
f(X)=a_0X^n+(a_1-aa_0)X^{n-1}+\cdots+(a_{n-1}-aa_{n-2})X-aa_{n-1}.
$$
Since $|a|> \sqrt{H(f)}$ and $H(f) \ge 1$, we obtain $|a_{i}|< \sqrt{H(f)}+n-1-i$ for $0\le i \le n-1$. By \eqref{Cauchy bound}, the moduli of the roots of $a_0X^{n-1}+a_1X^{n-2}+\cdots+a_{n-1}$ are less than
$$
1+\frac{1}{|a_0|}\max\{|a_1|,\ldots,|a_{n-1}|\}<\sqrt{H(f)}+n-1,
$$
where the inequality comes from $|a_0|\ge 1$. So, $a$ is a dominant root of $f$, and thus $f$ is a dominant polynomial.
\end{proof}

Here, we also give two families of non-dominant polynomials, which can help us to prove Theorem \ref{Dn*}.

\begin{lemma}\label{family5}
For even integer $n\ge 2$, let $f(X)=a_0X^n-a_1X^{n-1}+a_2X^{n-2}+\cdots+a_n\in \R[X]$ with $a_i>0$ for each $0\le i\le n$,
and $a_0\ge a_1, a_2\ge a_3, \cdots, a_{n-2}\ge a_{n-1}, a_n\ge a_0$. Then, $f$ is non-dominant.
\end{lemma}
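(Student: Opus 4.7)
The plan is to show that the root of $f$ of maximum modulus cannot be simple and strictly dominant. I will establish two facts and combine them: (i) every real root of $f$ has modulus strictly less than $1$, and (ii) the product of the moduli of the roots of $f$ equals $a_n/a_0\ge 1$. These force the maximum modulus to be realised only by non-real roots, and complex conjugation then supplies a second root of the same modulus.

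For (i), I would regroup the polynomial using the pairing suggested by the hypotheses:
\[
f(X)=a_n+\sum_{k=0}^{n/2-1}X^{n-2k-1}\bigl(a_{2k}X-a_{2k+1}\bigr).
\]
Since $n$ is even, each exponent $n-2k-1$ is a positive odd integer. For $X\ge 1$ every factor $X^{n-2k-1}$ is positive and $a_{2k}X-a_{2k+1}\ge a_{2k}-a_{2k+1}\ge 0$ by the pair hypotheses, so $f(X)\ge a_n>0$. For $X\le 0$ every $X^{n-2k-1}$ is non-positive while $a_{2k}X-a_{2k+1}\le -a_{2k+1}<0$, so each summand is non-negative and again $f(X)\ge a_n>0$. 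Thus every real root of $f$ lies in the open interval $(0,1)$ and in particular has modulus strictly less than $1$.

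For (ii), Vieta applied to $f(X)=a_0\prod_{i=1}^n(X-\alpha_i)$ gives $(-1)^n a_0\prod_i\alpha_i=a_n$, and since $n$ is even this yields $\prod_{i=1}^n|\alpha_i|=a_n/a_0\ge 1$ by the hypothesis $a_n\ge a_0$. If $f$ has no real roots, then every root is non-real and the maximum modulus is already realised by a complex-conjugate pair. Otherwise, by (i) the real roots' moduli multiply to a number strictly below $1$, forcing the non-real roots' moduli to multiply to something strictly above $1$; hence some non-real root has modulus $>1$, exceeding the modulus of any real root, so the maximum modulus is realised only by non-real roots. In either case complex conjugation produces a second root of the same modulus, so $f$ has at least two distinct roots of maximum modulus and is therefore non-dominant. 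The only step that requires attention is the grouping in (i), which genuinely relies on $n$ being even so that each exponent $n-2k-1$ is odd and the sign argument for $X\le 0$ goes through.
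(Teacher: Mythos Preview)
Your overall strategy is exactly the paper's: show all real roots have modulus strictly less than $1$, use Vieta and $a_n\ge a_0$ to force a root of modulus $\ge 1$, and conclude via complex conjugation. Part (ii) is fine and matches the paper.

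The gap is in part (i): your regrouping does not represent $f$. Expanding your sum gives
\[
a_n+\sum_{k=0}^{n/2-1}X^{n-2k-1}(a_{2k}X-a_{2k+1})=a_0X^n-a_1X^{n-1}+a_2X^{n-2}-a_3X^{n-3}+\cdots -a_{n-1}X+a_n,
\]
a polynomial with alternating signs. But in the lemma only the $X^{n-1}$ coefficient is negative; all of $a_2X^{n-2},a_3X^{n-3},\ldots,a_n$ appear with a plus sign (this is what the paper's inequality $f(x)>a_0x^n-a_1x^{n-1}$ for $x>0$ uses). Consequently your claim that $f(X)\ge a_n$ for every $X\le 0$ is false: for $n=4$ and $(a_0,a_1,a_2,a_3,a_4)=(1,1,100,100,1)$ one has $a_0\ge a_1$, $a_2\ge a_3$, $a_4\ge a_0$, yet
\[
f\!\left(-\tfrac12\right)=\tfrac1{16}+\tfrac18+25-50+1<0.
\]
So $f$ can (and here does) have negative roots; what you need is only that they lie in $(-1,0)$.

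The repair is exactly what the paper does. For positive $x$, since every term beyond the first two is positive, $0=f(x)>a_0x^n-a_1x^{n-1}$, hence $x<a_1/a_0\le 1$. For negative roots, look at $g(X)=f(-X)=a_0X^n+a_1X^{n-1}+a_2X^{n-2}-a_3X^{n-3}+\cdots+a_{n-2}X^2-a_{n-1}X+a_n$ and pair $(a_{2k}X^{n-2k}-a_{2k+1}X^{n-2k-1})=X^{n-2k-1}(a_{2k}X-a_{2k+1})$ for $k\ge 1$: for $X\ge 1$ each such pair is $\ge 0$ by $a_{2k}\ge a_{2k+1}$, while $a_0X^n+a_1X^{n-1}+a_n>0$, so $g$ has no roots in $[1,\infty)$ and hence $f$ has no roots in $(-\infty,-1]$. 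With this correction your argument is the same as the paper's.
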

\begin{proof}
Let $x$ be any positive real root of $f$. Then
$$
0=f(x)>a_0x^n-a_1x^{n-1}=x^{n-1}(a_0x-a_1),
$$
which yields $x<a_1/a_0\le 1$.

Set $g(X)=f(-X)$. Then, the negative real roots of $f(X)$ are exactly the positive real roots of $g(X)$.
Since
$$
g(X)=a_0X^n+a_1X^{n-1}+a_2X^{n-2}-a_3X^{n-3}+\cdots+a_{n-2}X^2-a_{n-1}X+a_n,
$$
it is easy to see that $g(x)>0$ for any real $x\ge 1$
(Evidently, for $n=2$ the polynomial $g(X)$ has no positive roots).
Thus, all the real roots of $f(X)$ lie in the interval $(-1,1)$.

Moreover, writing $f(X)$ as $f(X)=a_0\prod_{i=1}^{n}(X-\al_i)$, we obtain $a_n=(-1)^na_0\prod_{i=1}^{n}\al_i= a_0\prod_{i=1}^{n}\al_i$.
Since $a_n\ge a_0>0$, we see that $|\al_i|\ge 1$ for at least one index $i$. So, not all the roots of $f(X)$ lie strictly inside the unit circle. It follows that the largest
in modulus root of $f$ is complex (non-real), thus
$f(X)$ is non-dominant.
\end{proof}

\begin{lemma}\label{family6}
For odd integer $n$, let $f(X)=a_0X^n-a_1X^{n-1}+a_2X^{n-2}+\cdots+a_n\in \R[X]$ with $a_i>0$ for any
$0\le i\le n$,
and $a_0\ge a_1, a_2\ge a_3, \cdots, a_{n-1}\ge a_{n}, a_n\ge a_0$. Then, $f$ is non-dominant.
\end{lemma}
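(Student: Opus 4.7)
The plan is to follow the structure of the proof of Lemma \ref{family5} essentially verbatim, with adjustments coming only from the parity of $n$. The target is the same: show every real root of $f$ lies in $(-1,1)$, then combine this with the relation $\prod_i \alpha_i = (-1)^n a_n/a_0$ and the hypothesis $a_n \ge a_0$ to force at least one root of modulus $\ge 1$, which must then be non-real and render $f$ non-dominant (its complex conjugate is also a root of the same modulus).

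For a positive real root $x$, the hypothesis that every coefficient of $X^{n-2},X^{n-3},\ldots,X^0$ is positive makes $f(x)=0$ read $a_0 x^n - a_1 x^{n-1} = -(a_2 x^{n-2} + \cdots + a_n) < 0$, so $x < a_1/a_0 \le 1$. For negative real roots I would set $g(X) = f(-X)$; since $n$ is odd, a direct computation gives
\[
g(X) = -a_0 X^n - a_1 X^{n-1} - a_2 X^{n-2} + a_3 X^{n-3} - a_4 X^{n-4} + \cdots - a_{n-1} X + a_n.
\]
I would then group the terms from $X^{n-2}$ downward into consecutive pairs $-X^{n-2k-1}(a_{2k} X - a_{2k+1})$ for $k = 1,2,\ldots,(n-1)/2$; each such pair is $\le 0$ whenever $x \ge 1$ because $a_{2k} \ge a_{2k+1}$, with the last pair $-a_{n-1} X + a_n$ using the inequality $a_{n-1} \ge a_n$. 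Together with the strictly negative leading block $-a_0 x^n - a_1 x^{n-1}$, this yields $g(x) < 0$ for every $x \ge 1$, so $g$ has no positive zero on $[1,\infty)$, and $f$ has no negative zero in $(-\infty,-1]$.

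Assembling the two bounds, every real root of $f$ lies strictly inside the unit circle. On the other hand, the product of all roots of $f$ (with multiplicity) has absolute value $a_n/a_0 \ge 1$, so the product of their moduli is at least $1$, and not every root can have modulus $<1$. Thus the largest-modulus root of $f$ has modulus $\ge 1$ and, by the bounds just established, must be non-real; its complex conjugate is another root of the same modulus, so $f$ is non-dominant.

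The only step that requires real care is the sign bookkeeping in the expansion of $g(X)=f(-X)$: one must verify that the paired terms line up exactly with the inequalities in the hypothesis, and in particular that the last pair $(-a_{n-1}X, +a_n)$ now uses the inequality $a_{n-1}\ge a_n$, whereas in the even case of Lemma \ref{family5} the term $a_n$ sat alone as a positive tail and the paired inequalities were of the form $a_{2k} \ge a_{2k+1}$ with $k \le (n-2)/2$. Everything else is a direct parallel of Lemma \ref{family5}.
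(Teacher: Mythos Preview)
Your proof is correct and follows essentially the same approach as the paper. The paper's own proof simply refers back to the argument for Lemma~\ref{family5}, noting that one should take $g(X)=-f(-X)$; your choice $g(X)=f(-X)$ differs only by an overall sign (you show $g(x)<0$ for $x\ge 1$ rather than $>0$), and the grouping into pairs $-X^{n-2k-1}(a_{2k}X-a_{2k+1})$ together with the product-of-roots argument is exactly what is needed.
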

\begin{proof}
The proof is the same as that of Lemma \ref{family5}, the only one difference is that
here one should consider $g(X)=-f(-X)$.
\end{proof}

Now, by using some of the above polynomial families, we will prove the theorems stated in Section \ref{introduction}.

\begin{proof}[Proof of Theorem \ref{Dn}]
Consider the polynomial
$$f(X) = X^n +a_1X^{n-1} + \cdots + a_n\in \Z[X],$$
where $H(f)\le H$ and $|a_1| > n(n+1)^{1/4}H^{1/2}$. By Lemma \ref{family2}, we have $f\in S_n(H)$. Note that, for each sufficiently
large $H$, the number of such polynomials $f$ is at least
$$
(2H+1)^{n-1}2(H-\lfloor n(n+1)^{1/4}H^{1/2} \rfloor),
$$
which implies the desired result.
\end{proof}

\begin{proof}[Proof of Theorem \ref{Dnep}]
Evidently, $D_{n,\varepsilon}(H) \le 2H^{1-\varepsilon}(2H+1)^n$.
For the lower bound, consider the polynomials
$$f(X) = a_0X^n +a_1X^{n-1} + \cdots + a_n\in \Z[X]$$ satisfying $H(f)\le H$, $0<|a_0|\le H^{1-\varepsilon}$ and $|a_1| > n(n+1)^{1/4}|a_0|^{1/2}H^{1/2}$. Lemma \ref{family2} implies that $f\in S_{n,\varepsilon}(H)$.
Notice that the number of such polynomials $f$ is asymptotic to
$$
(2H)^{n-1}\cdot 4\sum_{a_0=1}^{\lfloor H^{1-\varepsilon} \rfloor} (H-\lfloor cH^{1/2}a_0^{1/2}\rfloor)
$$
as $H\to \infty$, where $c=n(n+1)^{1/4}$, which is asymptotic to
$$
4(2H)^{n-1}\int_{1}^{H^{1-\varepsilon}} (H-cH^{1/2}x^{1/2})\,\mathrm{d}x
$$
as $H\to \infty$. Since the above integral is equal to
$$H^{2-\varepsilon} -\frac{2}{3}cH^{2-3\varepsilon/2}+\frac{2}{3}cH^{1/2}-H,$$
 the main term is $4(2H)^{n-1}H^{2-\varepsilon}=2H^{1-\varepsilon}(2H)^n$
as $H\to \infty$. This gives the desired result.
\end{proof}

\begin{proof}[Proof of Theorem \ref{D2*}]
It is easy to see that the quadratic polynomial $f(X)=aX^2+bX+c \in \Z[X]$ is dominant if and only if $b \ne 0$ and its discriminant is greater than zero (namely, $\Delta=b^2-4ac>0$).
Since the number of polynomials $f$ with $H(f)\le H$ and $abc=0$ is $O(H^2)$, in the sequel we will assume that $abc\ne 0$.

If $ac<0$, we always have $\Delta>0$, so that $f$ is dominant. This gives exactly $2H(H^2+H^2)=4H^3$ of such dominant polynomials.
Now, suppose that $ac>0$. The two cases $a,c>0$ and $a,c<0$ yield the same number of such dominant polynomials, so we only need to consider one case, say $a,c>0$, and then multiply the result by $2$. Since $b$ can be both negative and positive,  it is obvious that the number is equal to
$$
2\sum_{a=1}^{H}\sum_{c=1}^{H}\sum_{b=\lfloor 2\sqrt{ac} \rfloor+1}^{H} 1=2\sum_{\substack{a,c=1 \\ 2\sqrt{ac} \le H}}^{H}(H-\lfloor 2\sqrt{ac}\rfloor),
$$
which is asymptotic to the following double integral
$$
\iint_{D} (2H-4\sqrt{xy})\, \mathrm{d} x \, \mathrm{d} y
$$
as $H\to \infty$, where $D=\{(x,y): 1\le x,y\le H, 2\sqrt{xy}\le H \}$.
By a direct calculation of this double integral  (here, for $1 \le x \le H/4$ we have $1 \le y \le H$ whereas for $H/4 \le x \le H$ we have $1 \le y \le H^2/(4x)$), we find that it is asymptotic to
$$
\frac{5+6\log 2}{18}H^3
$$
as $H\to \infty$.

Taking into account all the the above results, we find that
\begin{align*}
\lim_{H\to \infty}\frac{D_2^*(H)}{(2H)^3}&=\lim_{H\to \infty}\frac{4H^3+2\cdot \frac{5+6\log 2}{18}H^3}{(2H)^3}\\
&=\frac{41+6\log 2}{72},
\end{align*}
which completes the proof of the theorem.
\end{proof}

\begin{proof}[Proof of Theorem \ref{Dn*}]
Consider the polynomial $$f(X) = a_0X^n +a_1X^{n-1} + \cdots + a_n\in \Z[X],$$
where $H(f)\le H$ and $|a_1| > n(n+1)^{1/4}|a_0|^{1/2}H^{1/2}$. By Lemma \ref{family2}, we have $f\in S_n^*(H)$.
Set $c=n(n+1)^{1/4}$.
The number of such polynomials $f$ is asymptotic to
$$
(2H)^{n-1}\cdot 4\sum_{a_0=1}^{\lfloor H/c^2 \rfloor} (H-\lfloor cH^{1/2}a_0^{1/2} \rfloor)
$$
as $H\to \infty$, which is asymptotic to the following integral
$$
4(2H)^{n-1}\int_{1}^{ H/c^2 } (H-cH^{1/2}x^{1/2})\,\mathrm{d}x
$$
as $H\to \infty$. Note that the main term in the above integral is $H^2/(3c^2)$
as $H \to \infty$. The main term for the number of such polynomials $f$ is thus
$(2H)^{n+1}/(3c^2)$, which gives the desired lower bound of the lower limit.

Now, we want to derive the claimed upper bound for the upper limit. If $n$ is even,
we will count the polynomials $f(X)$ with
integer coefficients as in
Lemma \ref{family5}. Since $a_n \ge a_0 \ge a_1$ and $a_{2i} \ge a_{2i+1}$
for each $i=1,\dots,n/2-1$, it is easy to find that the number of these polynomials $f(X)$ is asymptotic to
$$
\frac{H^3}{6} \cdot \left(\frac{H^2}{2}\right)^{n/2-1} = \frac{H^{n+1}}{3\cdot 2^{n/2}}
$$
as $H\to \infty$.
Notice that $f(-X)$, $-f(X)$ and $-f(-X)$ are also non-dominant, and they are different polynomials.
So, as $H \to \infty$,  we get $H^{n+1}/(3 \cdot 2^{n/2-2})$ non-dominant polynomials. Thus, we have
$$
\limsup_{H\to \infty}D_n^*(H)/(2H)^{n+1}\le 1-\frac{1}{3\cdot 2^{3n/2-1}},
$$
where $n$ is even.

If $n$ is odd, similarly as the above, the number of polynomials $f(X)$ with integer coefficients as
in Lemma \ref{family6} (so satisfying  $a_{n-1} \ge a_n \ge a_0\ge a_1$ and $a_{2i}\ge a_{2i+1}$ for $i=1,\dots, (n-3)/2$) is asymptotic to

$$
\frac{H^4}{24} \cdot \left(\frac{H^2}{2}\right)^{(n-3)/2} = \frac{H^{n+1}}{3\cdot 2^{(n+3)/2}}
$$
as $H\to \infty$.
Now, as $H \to \infty$, multiplying by $4$ as above, we also get $H^{n+1}/(3 \cdot 2^{(n-1)/2})$ of non-dominant polynomials. Thus, for $n$ odd, we obtain
$$
\limsup_{H\to \infty}D_n^*(H)/(2H)^{n+1}\le 1-\frac{1}{3 \cdot 2^{(3n+1)/2}},
$$
as claimed.
\end{proof}

Consider cubic polynomials $f(X)=a_0X^3+a_1X^2+a_2X+a_3\in \Z[X]$ with $H(f)\le H$. The number of these polynomials $f$ with discriminant zero or $a_1a_2a_3$ zero is $O(H^3)$, and the number of such reducible polynomials $f$ is also $O(H^3)$. It is well-known that $f$ has three distinct real roots if its discriminant $\Delta$ is positive, and that it has two conjugate complex roots if $\Delta<0$. So, in particular, $f$ is dominant if $\Delta>0$. If $f$ is irreducible, $\Delta<0$ and $a_1a_2a_3 \ne 0$, then not all three
roots of $f$ lie on the same circle in view of Lemma~\ref{Ferguson};
hence, either $f(X)$ or its reciprocal polynomial $X^3f(X^{-1})$ is dominant. Thus, at least half of all cubic integer polynomials $f$ are dominant.

\section{Testing dominant polynomials}\label{keturi}

In this section, we will design some algorithms to test whether a given polynomial $f\in \Z[X]$ is dominant or not without finding the polynomial roots.
We first recall Sturm's theorem and the Bistritz stability criterion.

\subsection{Sturm's theorem}

For an arbitrary real sequence $a_0,a_1,\cdots,a_n$, its \emph{number of sign changes}
is determined as follows:
one first deletes all the zero terms of the sequence, then for the remaining non-zero terms one counts the number of
pairs of neighboring terms of different sign.

Given a polynomial $f(X)\in \R[X]$, applying Euclid's algorithm to $f(X)$ and its derivative
yields the following construction:
\begin{align*}
& p_0(X):= f(X),\\
&p_1(X):= f^\prime(X),\\
&p_2(X):= -\rem(p_0(X),p_1(X))=p_1(X)q_0(X)-p_0(X),\\
&p_3(X):= -\rem(p_1(X),p_2(X))=p_2(X)q_1(X)-p_1(X),\\
&\dots \\
&0=-\rem(p_{m-1}(X),p_{m}(X)), \quad \textrm{where $p_{m}(X)\ne 0$},
\end{align*}
where $\rem (p_i(X),p_j(X))$ and $q_i(X)$ are the remainder and the quotient of the polynomial division of $p_i(X)$ by $p_j(X)$, respectively.

Now we state Sturm's theorem as follows; see \cite[Theorem 1.4.3]{Prasolov}.
\begin{theorem}\label{thm:Sturm}
Let $f(X),p_0(X),p_1(X), \cdots, p_m(X)$ be as above.
For any $x\in \R$, let $\sigma(x)$ be the number of sign changes of the
sequence
$$p_0(x),p_1(x), \cdots, p_m(x).$$
Given real numbers $a,b$ with $a<b$, suppose that $f(a)f(b)\ne 0$, then the number of distinct real roots of $f(X)$
in the interval $(a,b)$ is equal to $\sigma(a)-\sigma(b)$.
\end{theorem}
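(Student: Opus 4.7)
The plan is to follow the classical proof of Sturm's theorem by tracking how the sign-change count $\sigma(x)$ evolves as $x$ increases from $a$ to $b$. Since $\sigma(x)$ is a piecewise constant integer-valued function of $x$, its only possible jumps occur when $x$ crosses a real zero of one of the polynomials $p_0,p_1,\dots,p_m$. The whole proof consists in identifying which such crossings actually change $\sigma$, and by how much.

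First I would record the structural properties built into the construction. The recurrence rearranges to $p_{i-1}(X)=q_{i-1}(X)p_i(X)-p_{i+1}(X)$ for $1\le i\le m-1$, and $p_m(X)$ equals, up to a nonzero constant factor, $\gcd(f(X),f'(X))$. To streamline the argument I would first reduce to the squarefree case: dividing every $p_i$ by $p_m$ produces precisely the Sturm sequence attached to $\tilde f:=f/p_m$, whose distinct real roots coincide with those of $f$ and whose final Sturm polynomial is a nonzero constant. Hence we may assume $p_m$ never vanishes. A consequence of the recurrence together with $p_m\ne 0$ is the standard non-coincidence property: no two consecutive Sturm polynomials share a real zero, for otherwise all later polynomials, and in particular $p_m$, would vanish at that point.

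Next I would analyze the crossings in two cases.

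\textbf{(i) An interior zero at level $i$.} Suppose $p_i(x_0)=0$ with $1\le i<m$. The recurrence gives $p_{i-1}(x_0)=-p_{i+1}(x_0)$, and by the non-coincidence property both values are nonzero, hence of opposite signs. A direct inspection of the six possibilities for the signs of $(p_{i-1},p_i,p_{i+1})$ just before and just after $x_0$ shows that the triple contributes exactly one sign change on each side of $x_0$. Therefore $\sigma$ is unchanged as $x$ crosses $x_0$.

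\textbf{(ii) A zero of $f$.} Suppose $f(x_0)=0$. Since $\tilde f$ is squarefree, $p_1(x_0)=\tilde f'(x_0)\ne 0$. A first-order Taylor expansion shows that $p_0(x)$ and $p_1(x_0)$ have opposite signs for $x$ slightly less than $x_0$ and the same sign for $x$ slightly greater. Thus the pair $(p_0,p_1)$ contributes one sign change immediately before $x_0$ and none immediately after, so $\sigma$ drops by exactly $1$ as $x$ crosses $x_0$.

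Combining (i) and (ii), $\sigma$ is nonincreasing on $(a,b)$ and decreases by exactly one at each distinct real root of $f$ in $(a,b)$; the hypothesis $f(a)f(b)\ne 0$ ensures $a$ and $b$ are themselves not roots, so $\sigma(a)-\sigma(b)$ equals the number of distinct real roots of $f$ in $(a,b)$. The main obstacle is the delicate bookkeeping in case (i): one must verify cleanly the non-coincidence of consecutive zeros and then run through the sign-change count for each allowable sign pattern of $(p_{i-1},p_i,p_{i+1})$ on both sides of $x_0$. Everything else is routine.
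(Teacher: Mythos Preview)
The paper does not prove this theorem at all: it states Sturm's theorem as a classical result and simply cites \cite[Theorem 1.4.3]{Prasolov}. So there is no ``paper's own proof'' to compare against; any correct argument is acceptable here.

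Your outline is the standard proof and is essentially sound. One imprecision: the claim that dividing each $p_i$ by $p_m$ yields ``precisely the Sturm sequence attached to $\tilde f:=f/p_m$'' is false as stated, since $f'/p_m\ne \tilde f'$ in general (from $f=p_m\tilde f$ one gets $f'/p_m=(p_m'/p_m)\tilde f+\tilde f'$). What \emph{is} true, and is all you need, is that the divided sequence is a Sturm chain for $\tilde f$: it satisfies the same three-term recurrence, its last term is a nonzero constant, and at any root $x_0$ of $\tilde f$ one has $(f'/p_m)(x_0)=\tilde f'(x_0)\ne 0$, so the sign behaviour of the leading pair is exactly as in your case (ii). With that correction the argument goes through.
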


\subsection{Bistritz stability criterion}\label{Bistritz}

We say that a real polynomial is \emph{stable} if all of its roots lie strictly inside the unit circle.
The Bistritz stability criterion, arising from signal processing and control theory and developped by Y. Bistritz, is a simple method
to determine whether a given real polynomial is stable or not;
see \cite{Bistritz1984} for more details and also \cite{Bistritz1986,Bistritz2002}.

For any real polynomial $f(X)\in \R[X]$ of degree $n\ge 1$, we denote by $f^*(X)$ the \emph{reciprocal polynomial} of
$f(X)$, that is $f^*(X)=X^nf(1/X)$.

Consider
$$
A_n(X)=a_0X^n+\cdots+a_n\in \R[X],
$$
where $n\ge 2$, $a_0\ne 0$ and $A_n(1)\ne 0$ (If  $A_n(1)=0$,  the polynomial is not stable.).
We assign to $A_n(X)$ a sequence of symmetric polynomials
$$T_m(X)=T_m^*(X),    m=n,n-1, \ldots , 0,$$
created by a three-term polynomial recursion as follows.

Initiation:
\begin{equation}\label{initiation}
 T_n(X)=A_n(X)+A_{n}^*(X), \quad  T_{n-1}(X)=\frac{ A_n(X)-A_{n}^*(X) }{X-1}.
\end{equation}

Recursion: for $k=n, n-1,\ldots,2$, define
\begin{equation}\label{recursion}
 T_{k-2}(X)=\frac{\delta_{k}(X+1) T_{k-1}(X) - T_{k}(X) }{X},
\end{equation}
 where  $\delta_{k}=\frac{T_{k}(0)}{T_{k-1}(0)}$. This recursion requires the \emph{normal conditions}:
 $$
T_{k-1}(0)\ne 0, \quad k=n,\ldots, 1,
 $$
 which mean that each polynomial $T_{k-1}(X)$ is of degree $k-1$. The recursion is interrupted when a $T_{k-1}(0)=0$ occurs ($k\ge 2$),
 which corresponds to a \emph{singular case}.

 We classify the singular cases into the following two classes:
the case when $T_{k-1}(X)= 0$ is called the first type singularity, and the case when $T_{k-1}(0)= 0$ but $T_{k-1}(X)\ne 0$
 is called the second type singularity.

There is a necessary and sufficient condition for the occurrence of the first type singularity.

\begin{theorem}\label{thm:singular1}
Let $A_n(X)\in \R[X]$ be of degree $n\ge 2$. Then, in the normal conditions
for $A_n(X)$,
the first type singularity occurs if and only if $A_n(X)$ has roots on the unit circle or reciprocal pair roots $(x,x^{-1})$.
\end{theorem}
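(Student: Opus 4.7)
The plan is to reduce the statement to an equivalence involving $\gcd(A_n, A_n^*)$: the first type singularity in the Bistritz recursion occurs if and only if $\gcd(A_n, A_n^*)$ has positive degree, and this GCD condition is in turn equivalent to $A_n$ having a root on the unit circle or a reciprocal pair of roots.

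From \eqref{initiation} I get the identities $2A_n = T_n + (X-1)T_{n-1}$ and $2A_n^* = T_n - (X-1)T_{n-1}$; combined with $T_n(1) = 2A_n(1) \ne 0$, which makes $X-1$ coprime to $T_n$, these yield $\gcd(A_n, A_n^*) = \gcd(T_n, T_{n-1})$. Next, rewriting \eqref{recursion} as $T_k = \delta_k(X+1)T_{k-1} - X T_{k-2}$ and invoking the normal condition $T_{k-1}(0) \ne 0$, which forces $\gcd(X, T_{k-1}) = 1$, I obtain $\gcd(T_k, T_{k-1}) = \gcd(X T_{k-2}, T_{k-1}) = \gcd(T_{k-1}, T_{k-2})$. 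Iterating this invariance downward, $\gcd(A_n, A_n^*) = \gcd(T_{k+1}, T_k)$ at every index $k$ reached by the recursion.

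For the forward direction, suppose the first type singularity occurs, i.e., $T_{k-1} \equiv 0$ for some $k \ge 2$. Then the recursion step that produced $T_{k-1}$ simplifies to $T_{k+1} = \delta_{k+1}(X+1)T_k$, so $T_k$ divides $T_{k+1}$; hence $\gcd(T_{k+1}, T_k) = T_k$ has positive degree $k$, and by the invariance $\gcd(A_n, A_n^*)$ has positive degree. Conversely, if the recursion runs all the way down in the normal conditions without any singularity, it produces $T_0$ as a symmetric polynomial of degree $0$ with $T_0(0) \ne 0$, i.e., a nonzero constant, and the invariance gives $\gcd(A_n, A_n^*) = \gcd(T_1, T_0)$, which is also a constant; thus, assuming the recursion is not cut short by a second type singularity, the only way for $\gcd(A_n, A_n^*)$ to be nontrivial is that a first type singularity occurs. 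Finally, a common nonzero root $x_0$ of $A_n$ and $A_n^*$ satisfies $A_n(x_0) = 0$ and $x_0^n A_n(1/x_0) = A_n^*(x_0) = 0$, so both $x_0$ and $1/x_0$ are roots of $A_n$: either $|x_0| = 1$ (including the cases $x_0 = \pm 1$ and complex $x_0$ on the unit circle) or $|x_0| \ne 1$ and $(x_0, 1/x_0)$ is a genuine reciprocal pair; the converse implications are immediate via the same identity.

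The main obstacle, and the reason the theorem is stated \emph{in the normal conditions}, is the GCD-preserving step $\gcd(T_k, T_{k-1}) = \gcd(T_{k-1}, T_{k-2})$, which requires $T_{k-1}(0) \ne 0$ at each stage so that the spurious factor $X$ introduced by the recursion can be discarded. If a second type singularity interrupted the recursion before the common factor of $A_n$ and $A_n^*$ was exposed, the descent would stall and no first type singularity would be detected even though $\gcd(A_n,A_n^*) \ne 1$; this is precisely the pathological situation that the hypothesis \emph{in the normal conditions for $A_n(X)$} is designed to exclude.
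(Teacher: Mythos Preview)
The paper does not supply its own proof of this theorem: it is quoted as a known result from Bistritz's work (references \cite{Bistritz1984,Bistritz1986,Bistritz2002} cited in Section~\ref{Bistritz}), so there is no in-paper argument to compare against. Your argument is essentially correct and follows the natural line of attack---the Bistritz recursion is a disguised Euclidean algorithm on the pair $(T_n,T_{n-1})$, the invariance $\gcd(T_k,T_{k-1})=\gcd(T_{k-1},T_{k-2})$ under the normal condition $T_{k-1}(0)\ne 0$ is exactly the right mechanism, and the translation between ``$\gcd(A_n,A_n^*)$ nontrivial'' and ``root on the unit circle or reciprocal pair'' is straightforward.

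One small gap: when you invoke ``the recursion step that produced $T_{k-1}$,'' you implicitly assume $k\le n-1$, since $T_{n-1}$ comes from the initiation \eqref{initiation} rather than from \eqref{recursion}, and there is no $T_{n+1}$. The boundary case $T_{n-1}\equiv 0$ (equivalently $A_n=A_n^*$) is even easier, however: then $\gcd(T_n,T_{n-1})=T_n$ already has degree $n$, and every root of a self-reciprocal real polynomial lies on the unit circle or is paired with its reciprocal. Your closing discussion of why the hypothesis ``in the normal conditions'' is needed---to prevent a second type singularity from stalling the GCD descent before the common factor surfaces---is well taken and matches how the paper uses the result (only the forward implication is invoked, in Lemma~\ref{singular1}).
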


Based on Theorem \ref{thm:singular1}, we will see later that for our purpose, essentially we don't need to afraid the first type singularity, see Lemma \ref{singular1}.

However, the appearance of the second type singularity does not correspond to a special pattern of roots location, except that it implies an unstable polynomial.
So, here we need to handle the second type singularity.
Starting from $n$ to 1, if $k$ is the first integer for which $T_{k-1}(0)= 0$ and $T_{k-1}(X)\ne 0$, we replace $T_k(X)$ and $T_{k-1}(x)$ by
$$
T_k(X)+(X-1)T_{k-1}(X)(X^q-X^{-q})
$$
and
$$
T_{k-1}(X)(K+X^q+X^{-q}),\quad K>2,
$$
respectively, where $q$ is the multiplicity of $T_{k-1}(X)$ at $X=0$ and $K$ is an arbitrary real constant greater than 2; after this replacement we can see that $T_{k-1}(0)\ne 0$. Then, we return to the recursion \eqref{recursion}. As soon as we encounter the second type singularity, we apply the same treatment again. Finally, we will also obtain the sequence
$T_n(X), T_{n-1}(X), \ldots, T_0(X)$.

\begin{theorem}\label{thm:Bistritz}
Let $A_n(X), T_0(X), \ldots, T_n(X)$ be as above (including the situation that the second type singularity occurs). Define $\nu_n$ as the number of sign changes of the
sequence
$$
T_n(1),\ldots,T_0(1).
$$
Then, $A_n(X)$ is stable if and only if the normal conditions hold and $\nu_n=0$.
Furthermore, if the normal conditions hold or only the second type singularity  occurs, then $A_n(X)$ has $\nu_n$ roots strictly outside the unit circle, and $n-\nu_n$ roots strictly inside the unit circle (counted with multiplicity).
\end{theorem}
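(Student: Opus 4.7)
My proof proposal follows the line of Bistritz's original argument \cite{Bistritz1984}. First I would establish by downward induction on $k$ that, under the normal conditions, each $T_k(X)$ is genuinely of degree $k$ and self-reciprocal, i.e., $T_k^*(X) = T_k(X)$. The initiation (\ref{initiation}) gives this for $k=n$ and $k=n-1$, and a direct computation shows that the recursion (\ref{recursion}) preserves the symmetry $T_k^*=T_k$: the factor $X+1$ is itself self-reciprocal, and division by $X$ lowers the degree by one while maintaining symmetry, provided $T_{k-1}(0)\ne 0$ (so that the numerator vanishes at $X=0$). Self-reciprocity forces the roots of each $T_k$ to come in reciprocal pairs $(x,1/x)$; in particular, the function $\theta \mapsto T_k(e^{i\theta})e^{-ik\theta/2}$ is real-valued on $[0,2\pi)$.

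Next, writing $A_n(X) = \frac{1}{2}\bigl(T_n(X) + (X-1)T_{n-1}(X)\bigr)$ and unfolding (\ref{recursion}), $A_n$ appears as a Euclid-type combination of the $T_k$. I would count the roots of $A_n$ strictly outside the unit circle via the argument principle along $|X|=1$, and show that this count equals the number of sign changes in the sequence $T_n(1),\ldots,T_0(1)$. The key algebraic tool is that these evaluations form a Sturm-like sequence: evaluating (\ref{recursion}) at $X=1$ gives
\[
T_{k-2}(1) = 2\delta_k T_{k-1}(1) - T_k(1),
\]
and one shows that whenever an interior term vanishes the two neighbors must have opposite signs. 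Combined with the reality of $T_k(e^{i\theta})e^{-ik\theta/2}$, this links $\nu_n$ directly to a winding number and hence to the root count. Stability is then equivalent to the normal conditions holding (otherwise Theorem \ref{thm:singular1} produces a root on the unit circle or a reciprocal pair $(x,1/x)$, both incompatible with stability) together with $\nu_n=0$; in general, $\nu_n$ and $n-\nu_n$ give the numbers of roots strictly outside and strictly inside the unit circle, respectively.

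For the second type singularity I would verify that the prescribed replacement of $T_k$ by $T_k + (X-1)T_{k-1}(X^q - X^{-q})$ and of $T_{k-1}$ by $T_{k-1}(K+X^q+X^{-q})$ preserves self-reciprocity, restores $T_{k-1}(0)\ne 0$ (by the choice of $q$), and leaves $T_k(1)$ unchanged as well as the sign of $T_{k-1}(1)$: the factor $(X-1)$ kills the perturbation of $T_k$ at $X=1$, while $K+2>0$ merely rescales $T_{k-1}(1)$ by a positive constant. Consequently $\nu_n$ is invariant under the repair, and the recursion may be resumed without affecting the final root count.

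The hardest step is the Sturm/winding argument in the second paragraph. Making the sign-change count rigorous requires either a contour-integral computation that tracks the real zeros of $T_k(e^{i\theta})e^{-ik\theta/2}$ on $[0,2\pi)$ and their interlacing across successive $k$, or a connectedness argument on the space of polynomials satisfying the normal conditions combined with evaluation on a reference family such as $A_n(X)=X^n$. Either route is substantially more delicate than the purely algebraic verifications in the other steps, and this is where the bulk of Bistritz's original analysis is concentrated.
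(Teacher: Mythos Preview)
The paper does not give its own proof of this theorem: it is quoted as a known result of Bistritz, with the reader directed to \cite{Bistritz1984,Bistritz1986,Bistritz2002}. So there is no in-paper argument to compare your proposal against; your sketch is, appropriately, an outline of Bistritz's original proof rather than an alternative to anything in the present paper.

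As a sketch of Bistritz's argument your outline is broadly correct in identifying the main ingredients: the self-reciprocity of the $T_k$, the Sturm-type three-term relation $T_{k-2}(1)=2\delta_k T_{k-1}(1)-T_k(1)$, and a winding-number computation on $|X|=1$ linking $\nu_n$ to the number of roots of $A_n$ outside the unit circle. The one place where your outline has a genuine gap is the handling of the second-type singularity. You verify that the repair leaves $T_k(1)$ unchanged and multiplies $T_{k-1}(1)$ by the positive constant $K+2$, and then conclude that ``$\nu_n$ is invariant under the repair''. But that does not follow: after the replacement the recursion is resumed, so all of $T_{k-2}(1),\ldots,T_0(1)$ are in general different from what they would have been without the repair, and you have said nothing about those terms. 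What Bistritz actually establishes is stronger: at each stage the pair $(T_k,T_{k-1})$ carries the full root-location information of $A_n$ (via an associated sequence of reflection-type quantities, or equivalently via the winding-number interpretation), and the prescribed replacement preserves that information while restoring $T_{k-1}(0)\ne 0$. Proving this requires more than checking values at $X=1$; it needs the structural argument from \cite{Bistritz1984,Bistritz2002} that you allude to only in your final paragraph.
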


\subsection{Algorithms for testing dominant polynomials}\label{algorithm}

For a quadratic real polynomial $f=aX^2+bX+c\in \R[X]$, the testing problem is easy, because $f$ is dominant if and only if $b\ne 0$ and $b^2-4ac>0$.
Now, we will apply Sturm's theorem and the Bistritz stability criterion to design general algorithms for testing dominant integer polynomials.

In fact, there are several ways to design such a general algorithm.
We first present a simple algorithm (see Algorithm \ref{test1}) as a comparison, and then provide a more efficient one in practice (see Algorithm \ref{test2}). Finally, we will give a slight improvement of Algorithm \ref{test2} in the case of irreducible polynomials; see Algorithm \ref{test3}.

\subsubsection{General case I}

Following \eqref{distance0} and \eqref{distance2}, for any polynomial $f\in \Z[X]$ of degree $n\ge 2$, we define the following:
$$
d_1(f)= 2^{-n(n-1)(n-2)/2} (n+1)^{-n(n-1)-1/2}H(f)^{-2n(n-1)-1} ,
$$
and
$$
d_2(f)=\sqrt{3}(n+1)^{-n-1/2}H(f)^{1-n}.
$$

Moreover, for $f(X)=a_0X^n+a_1X^{n-1}+\dots+a_n\in \Z[X]$, we define
$$
C_1(f)=\frac{1}{1+H(f)}
$$
and
$$
C_2(f)=1+\frac{1}{|a_0|}\max\{|a_1|,\ldots,|a_{n}|\}.
$$
Then, by \eqref{Cauchy bound}, every root of $f$ is of modulus greater than $C_1(f)$ and less than $C_2(f)$.

Given a polynomial $f(X)\in\Z[X]$ and a real root $x$ of $f$, if $x$ is located in a small annulus $r< |z|< R$
with $R-r<d_1(f)$, then, by \eqref{distance2}, we can see that
there are no roots of $f$ located in this annulus and with
modulus not equal to $|x|$.

\begin{lemma}\label{singular1}
Let $f(X)\in \Z[X]$ be of degree $n\ge 2$, and assume that there is a real root $x$ of $f$ lying in the annulus $r<|z|<R$,
where $R-r\le\frac{1}{2}d_1(f)$.
We define $g(X)=f(rX)$, and consider the normal conditions of $g(X)$.
 If the first type singularity occurs, then $f$ has a root $y$ such that $|y|>|x|$.
\end{lemma}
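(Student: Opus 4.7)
The plan is to apply Theorem \ref{thm:singular1} to $A_n(X) := g(X) = f(rX)$: the first type singularity in its Bistritz recursion is equivalent to $g$ having a root on the unit circle or a reciprocal pair $(z, z^{-1})$ of roots with $z \ne \pm 1$. Translating through $g(X) = f(rX)$, either $f$ has a root $\alpha$ with $|\alpha| = r$, or $f$ has two distinct roots $\beta, \beta'$ with $\beta\beta' = r^{2}$; in the latter case I relabel so that $|\beta| > r > |\beta'|$. The target is a root $y$ of $f$ with $|y| > |x|$.

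The central tool is the real-complex separation inequality \eqref{distance2}: for the real root $x$ and any non-real root $\gamma$ of $f$ with $|\gamma|\ne|x|$, one has $||x|-|\gamma||\ge d_{1}(f)$. Since the hypothesis gives $|x|-r < R-r \le d_{1}(f)/2 < d_{1}(f)$, this excludes any non-real root of $f$ from having modulus in $[r,R]$. Consequently, in the unit-circle alternative $\alpha$ must be real, so $\alpha=\pm r$; and in the reciprocal-pair alternative, if $\beta$ is non-real then $|\beta|$ cannot lie in $[r,R]$, so $|\beta|>r$ already forces $|\beta|>R>|x|$ and $y=\beta$ works. Likewise, if $\beta$ is real with $|\beta|\ge R$, then immediately $y=\beta$.

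The configurations left over are (a) $\alpha=\pm r$, and (b) $\beta$ real with $r<|\beta|<R$. For (b), the paragraph preceding the lemma---an application of the same narrowness $R-r\le d_{1}(f)/2$ through \eqref{distance2}---forces $|\beta|=|x|$, hence $\beta=\pm x$; the reciprocal relation then yields a further root $\beta'=\pm r^{2}/x$ of modulus below $r$. The main obstacle is that in both (a) and (b) the first type singularity is witnessed by roots of $f$ of modulus at most $|x|$ rather than strictly greater. I would close this by invoking the preprocessing implicit in the Bistritz setup: the Bistritz requirement $g(1)=f(r)\ne 0$ already rules out $\alpha=r$, while a direct evaluation of $f$ at $-r$ together with the observation that configuration (b) forces $f$ to be divisible by a factor involving $x$ and $r^{2}/x$ allows these degenerate pairings to be detected before the recursion, so that the only surviving cause of a first type singularity is the one producing $|\beta|>R>|x|$, delivering the desired $y$.
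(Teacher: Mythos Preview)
Your argument has a genuine gap in its final paragraph. The appeal to ``preprocessing implicit in the Bistritz setup'' does not prove the lemma as stated: the claim is that \emph{whenever} the first type singularity occurs under the given hypotheses, a root $y$ with $|y|>|x|$ exists. Observing that one could detect configurations (a) and (b) beforehand by evaluating $f(-r)$ or by checking a divisibility condition is a proposal to amend the algorithm, not a demonstration that such a $y$ exists. In particular, nothing in the Bistritz setup excludes $g(-1)=f(-r)=0$, and your treatment of (b) never produces the required $y$ nor a contradiction. Your invocation of the paragraph preceding the lemma to force $|\beta|=|x|$ is also off: that paragraph rests on \eqref{distance2}, which separates a real root from a \emph{non-real} one, so it says nothing when $\beta$ is real.

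The paper's proof is organized differently and avoids your dead end. First, the unit-circle alternative is dismissed outright by the assertion that, ``by the choice of $r$ and $R$'', $f$ has no root of modulus exactly $r$; this is taken as an implicit hypothesis inherited from the bisection in Algorithm~\ref{alg:Sturm}, not something derived inside the proof. Second, for the reciprocal pair $(\alpha,\alpha^{-1})$ with $|\alpha|<1$, the paper does \emph{not} split on whether $r\alpha^{-1}$ is real; it compares $|r\alpha^{-1}|$ with $|x|$ and argues by contradiction. If $|r\alpha^{-1}|=|x|$, one passes to the \emph{inner} partner $r\alpha$ and computes
\[
0<|x|-|r\alpha|=|x|-\frac{r^{2}}{|x|}<R-\frac{r^{2}}{R}\le\Bigl(1+\frac{r}{R}\Bigr)\cdot\frac{d_{1}(f)}{2}<d_{1}(f);
\]
if $|r\alpha^{-1}|<|x|$, then $0<|x|-|r\alpha^{-1}|<R-r\le d_{1}(f)/2$. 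Either way \eqref{distance2} is invoked to obtain a contradiction, forcing $|r\alpha^{-1}|>|x|$ and yielding $y=r\alpha^{-1}$. The step you are missing is precisely this contradiction strategy in place of a direct search for $y$. (One may note that the paper's use of \eqref{distance2} here tacitly treats the relevant root as non-real, so your instinct that the real case is delicate is not unfounded; but the resolution is to rule such cases out via the implicit choices of $r$, not to push them into a preprocessing step.)
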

\begin{proof}
Suppose that the first type singularity occurs for $g(X)$. By Theorem \ref{thm:singular1}, we know that
$g(X)$ has roots on the unit circle or a reciprocal pair of roots.

If $g(X)$ has a root $\al$ on the unit circle, then $r\al$ is a root of $f$ and
$|r\al|=r$. But by the choice of
$r$ and $R$, $f$ has no roots on the circle $|z|=r$. This is a contradiction. So, $g(X)$ has a reciprocal pair of roots.

In the sequel, we let $(\al,\al^{-1})$ be this reciprocal pair roots of $g(X)$ with $|\al|<1$. So,
$f$ has roots $r\al$ and $r\al^{-1}$.

Assume that $|r\al^{-1}|=|x|$. Since $|r\al|<r<|x|$, we have
\begin{equation}\label{rz1}
0<|x|-|r\al|=|x|-\frac{r^2}{|x|}<R-\frac{r^2}{R}\le (1+\frac{r}{R})\cdot \frac{1}{2}d_1(f)<d_1(f).
\end{equation}
Notice that both $x$ and $r\al$ are roots of $f$ and $x$ is real. Therefore, \eqref{rz1} contradicts \eqref{distance2}.

Now, assume that $|r\al^{-1}|<|x|$. Since $|\al^{-1}|>1$, we have
\begin{equation}
0<|x|-|r\al^{-1}|<|x|-r<R-r\le \frac{1}{2}d_1(f).
\notag
\end{equation}
As the above, this also yields a contradiction.

Thus, we must have $|r\al^{-1}|>|x|$. So, $r\al^{-1}$ is exactly a root we need,
this completes the proof.
\end{proof}

Now we will explain Algorithm \ref{test1} step by step.

Step \ref{state1-1}:
Let $R_+$ be the largest positive root of $f$ (if it exists), and let
$R_-$ be the largest modulus of the negative roots of $f$ (if it exists).
Suppose that $R_+$ exists.
Applying Algorithm \ref{alg:Sturm} to $f$ in the interval $(C_1(f),  C_2(f))$ by setting $d=\frac{1}{2}d_1(f)$, we locate the positive root $R_+$ in an annulus $r_1< |z| < R_1$ such that $R_1-r_1\le \frac{1}{2}d_1(f)$. Then, we apply a similar algorithm to $f$ in the interval $(-C_2(f), -R_1)$. Note that we might have $R_1=C_2(f)$, but this case also can be excluded by Sturm's theorem. If there are no roots locating in this interval, then return $r=r_1, R=R_1$; otherwise,  we can get an annulus $r_2< |z| < R_2$
such that $r_2< R_- < R_2$ and $R_2-r_2\le \frac{1}{2}d_1(f)$, then return $r=r_2, R=R_2$. If $R_+$ doesn't exist, we search negative roots of $f$ in the interval $(-C_2(f), -C_1(f))$.

Step \ref{state1-2}:
Since $g(X)$ already has a real root strictly outside the unit circle, it is not stable. So, the normal conditions of
$g(X)$ may not hold. When the first type singularity occurs, by Lemma \ref{singular1}, we see that
$f$ is not dominant. If the normal conditions hold or only the second type singularity occurs, following the discussions in
Section \ref{Bistritz} we construct the sequence $T_n(X),\ldots, T_0(X)$ and compute the number of sign changes $\nu_n$ of the sequence $T_n(1),\ldots, T_0(1)$. Then, by Theorem \ref{thm:Bistritz}, $g(X)$ has $\nu_n$ roots strictly outside the unit circle.

Step \ref{state1-3}:
If $g(X)$ has only one root strictly outside the unit circle ($\nu_n=1$), then $f$ has only one root strictly
outside the circle $|z|=r$. Since we already know that there exists one real root
located in the annulus $r< |z| < R$, we can deduce that $f$ is dominant.
Otherwise, $f$ is not dominant.

\begin{algorithm}
\caption{Simple test of dominant polynomials}
\label{test1}
\begin{algorithmic}[1]
\Require polynomial $f(X)\in \Z[X]$ of degree $n\ge 2$.
\Ensure 1 ($f$ is dominant) or 0 ($f$ is not dominant).
\State Use Sturm's theorem to locate a real root with the largest modulus among the real roots of $f$ in a small annulus $r< |z| < R$ such that $R-r\le \frac{1}{2}d_1(f)$. If no non-zero real roots exist, return 0. \label{state1-1}
\State Let $g(X)=f(rX)$, and apply the Bistritz stability criterion to calculate the number of roots of $g(X)$ strictly outside the unit circle.\label{state1-2}
\State  If $g(X)$ has only one root strictly outside the unit circle, then return 1; otherwise, return 0. \label{state1-3}
\end{algorithmic}
\end{algorithm}

\begin{algorithm}
\caption{Searching the largest positive real root}
\label{alg:Sturm}
\begin{algorithmic}[1]
\Require polynomial $f\in \Z[X]$ of degree $n\ge 2$, interval $(a,b)$ with $0<a<b$ and $f(a)f(b)\ne 0$, a real number $d>0$.
\Ensure If $f$ has roots in $(a,b)$, output 1 and a small annulus $r< |z| < R$ with $R-r\le d$ and the annulus contains the largest root in $(a,b)$; otherwise, output 0.
\State $i=0$
\State Calculate the sequence $p_0(X),\ldots,p_m(X)$ in Sturm's theorem.
\If {$f$ has roots in $(a,b)$ (using Sturm's theorem) }
  \State $i=1$
  \While {$b-a> d$} \label{condition}
     \State Put $c=\frac{a+b}{2}$
     \If {$f(c)=0$} \label{step7}
       \State $a=c-\frac{1}{2}d$ \label{step8}
     \Else
        \If {$f$ has roots in $(c,b)$ (using Sturm's theorem)}
          \State $a=c$
        \Else
          \State $b=c$
        \EndIf
     \EndIf
  \EndWhile
\EndIf
\State If $i=1$, return $1,r=a,R=b$; otherwise, return 0.
\end{algorithmic}
\end{algorithm}

\subsubsection{General case II}

Since the lower bound $d_1(f)$ in \eqref{distance2} will become very small for large $n$, Algorithm \ref{test1} will be less efficient for polynomials of higher degrees. Here, we will use the lower bound $d_2(f)$ in \eqref{distance0} to design a more efficient algorithm; see Algorithm \ref{test2}.

Now, we explain briefly Algorithm \ref{test2} step by step.

Step \ref{state2-1}: the explanation is the same as Step \ref{state1-1} of Algorithm \ref{test1}, except that we apply  Algorithm \ref{alg:Sturm} by setting $d=d_2(f)$.

Step \ref{state2-2}: if $g$ is not stable, then $f$ has roots on or strictly outside the circle $|z|=R$. Note that all the real roots of $f$ are strictly inside the circle $|z|=R$, so we can judge that $f$ is not dominant.

Step \ref{state2-3}: if $g$ is stable, we have to narrow the annulus to ensure that it contains no complex (non-real) roots.

Steps \ref{state2-4} and \ref{state2-5}: the explanations are the same as Steps \ref{state1-2} and \ref{state1-3} of Algorithm \ref{test1}.

In Algorithm \ref{test2}, we first apply the lower bound $d_2(f)$ to locate a real root with the largest modulus among all real roots. If $f$ is not dominant, then the process is likely to stop in Step \ref{state2-2}; otherwise, we will use the lower bound $d_1(f)$ to narrow the small annulus $r< |z| < R$. Clearly, Algorithm \ref{test2} is more efficient than Algorithm \ref{test1}, especially when one wants to test a large number of polynomials at the same time, like Section \ref{numerical}.

\begin{algorithm}
\caption{Efficient test of dominant polynomials}
\label{test2}
\begin{algorithmic}[1]
\Require polynomial $f(X)\in \Z[X]$ of degree $n\ge 2$.
\Ensure 1 ($f$ is dominant) or 0 ($f$ is not dominant).
\State Use Sturm's theorem to locate a real root with the largest modulus among the real roots of $f$ in a small annulus $r< |z| < R$ such that $R-r\le d_2(f)$. If no non-zero real roots exist, return 0. \label{state2-1}
\State Let $g(X)=f(RX)$, and apply the Bistritz stability criterion to test whether $g(X)$  is stable or not. If $g$ is not stable, then return 0; otherwise, execute the following steps.\label{state2-2}
\State Similar as Step \ref{state2-1}, use Sturm's theorem to narrow the annulus $r< |z| < R$, and then obtain a new annulus $r_1< |z| < R_1$ with $R_1-r_1\le \frac{1}{2}d_1(f)$, where the real root is located.\label{state2-3}
\State Let $h(X)=f(r_1X)$, and apply the Bistritz stability criterion to calculate the number of roots of $h(X)$ strictly outside the unit circle.\label{state2-4}
\State  If $h(X)$ has only one root strictly outside the unit circle, then return 1; otherwise, return 0. \label{state2-5}
\end{algorithmic}
\end{algorithm}

\subsubsection{Irreducible polynomials}

Algorithm \ref{test2} is applicable to all integer polynomials of degree at least 2. However, for irreducible integer polynomials, we can make a slight improvement of Algorithm \ref{test2}; see Algorithm \ref{test3} below.

Comparing with Algorithm \ref{test2}, we only need to explain the following steps.

Step \ref{state3-1}: we gather all the exponents of $X$ in $f$ whose coefficients are non-zero, then compute their greatest common divisor. If it is greater than 1, then $f$ has such a form, and thus $f$ is not dominant.

Step \ref{state3-4}: in fact, we change the condition of Step 5 in Algorithm \ref{alg:Sturm} to be ``$b-a\ge d_1(f)$". Because here we don't need to use Lemma \ref{singular1}.

Step \ref{state3-6}:  if $h(X)$ is stable, then all the roots of $f$ are strictly inside the circle $|z|=R_1$. Note that $f$ has a real root, say $\al$, lying in the annulus $r_1<|z|<R_1$. If there exists another root lying in this annulus, then it must have modulus $|\al|$ by the choice of $r_1$ and $R_1$. Thus, by Lemma \ref{Ferguson}, $f$ is a polynomial with respect to $X^m$ for some integer $m\ge 2$; but this has been excluded in Step \ref{state3-1}. It follows that $\al$ is the dominant root of $f$, and so $f$ is dominant.

In addition, if $n\ge 3$, by the discussion below Lemma \ref{distance0}, we can let $d_1(f)$ be the following
$$
 2^{-n(n-1)(n-2)/2} (n+1)^{-(n-1)(n-2)-1/2}H(f)^{-2(n-1)(n-2)-1}.
$$

We also would like to indicate that for an irreducible polynomial $f\in \Z[X]$, $f$ has no roots in rational numbers, so we can drop Steps \ref{step7} and \ref{step8} in Algorithm \ref{alg:Sturm}.

\begin{algorithm}
\caption{Test of dominant irreducible polynomials}
\label{test3}
\begin{algorithmic}[1]
\Require irreducible polynomial $f\in \Z[X]$ of degree $n\ge 2$.
\Ensure 1 ($f$ is dominant) or 0 ($f$ is not dominant).
\State Check whether $f$ is a polynomial with respect to $X^m$ for some integer $m\ge 2$. If yes, return 0; otherwise, executive the following steps. \label{state3-1}
\State Use Sturm's theorem to locate a real root with the largest modulus among the real roots of $f$ on a small annulus $r< |z| < R$ such that $R-r\le d_2(f)$.
If no non-zero real roots exist, return 0. \label{state3-2}
\State Let $g(X)=f(RX)$, and apply the Bistritz stability criterion to test whether $g(X)$  is stable or not. If $g$ is not stable, then return 0; otherwise, execute the following steps.\label{state3-3}
\State Similar as Step \ref{state3-2}, use Sturm's theorem to narrow the annulus $r< |z| < R$, and then obtain a new annulus $r_1< |z| < R_1$ with $R_1-r_1< d_1(f)$, where the real root is located.\label{state3-4}
\State Let $h(X)=f(R_1X)$, and apply the Bistritz stability criterion to  test whether $h(X)$  is stable or not. \label{state3-5}
\State  If $h(X)$ is stable, then return 1; otherwise, return 0. \label{state3-6}
\end{algorithmic}
\end{algorithm}

\subsubsection{Complexity}\label{complexity}
Finally, we want to estimate the time complexity of Algorithms \ref{test1}, \ref{test2} and \ref{test3}.

Here, the time complexity means the total number of required arithmetic operations
(multiplication and addition). We omit the running time for comparisons and decisions, because this indeed can be ignored compared to the time required for arithmetic operations.

In fact, we only need to count the running time which is devoted to applying Sturm's theorem and the Bistritz stability criterion. According to \cite[Section 4]{Bistritz1986}, testing the Bistritz stability criterion needs $O(n^2)$ additions and multiplications.

For applying Sturm's theorem to a given polynomial $f$ of degree $n$,
we first need to compute the sequence $p_0(X),p_1(X),\ldots, p_m(X)$
by using the polynomial long division. Note that for two polynomials $g(X),h(X)\in \Z[X]$ with $\deg g \ge \deg h$, the polynomial long
division of $g$ by $h$ needs at most  $2\deg h(\deg g-\deg h+1)$ arithmetic operations. So, computing the above sequence for $f$ requires $O(n^2)$ arithmetic operations.

For a given $f(X)\in \Z[X]$ with $\deg f=n$, we use Horner's method (see \cite[Section 1.2.2]{Mignotte}) to compute the evaluation of $f$ at a point $X=a$. This requires $2n$ arithmetic operations. Thus, one call of Sturm's theorem needs $O(n^2)$ arithmetic operations. In view of Algorithm \ref{alg:Sturm} and the definitions of $d_1(f), C_1(f)$ and $C_2(f)$, there are about $n^3+n^2\log H(f)$ (up to a constant time) calls of Sturm's theorem.

Therefore, most of the running time is used on applying repeatedly Sturm's theorem, and the time complexity of Algorithm \ref{test1} is
 $O(n^5+n^4\log H(f))$.

However, if $f$ is not dominant, then the time complexity to test $f$ by using Algorithm \ref{test2} or Algorithm \ref{test3} is likely to be $O(n^3\log n+n^3\log H(f))$. In general, the time complexity of Algorithms \ref{test2} and \ref{test3} is also $O(n^5+n^4\log H(f))$.

If we know that $f$ has only real roots beforehand, then in Algorithm \ref{test2}, we can drop Step \ref{state2-3} and let $h(X)=f(rX)$ in Step \ref{state2-4}. In this case, the complexity will be $O(n^3\log n+n^3\log H(f))$.

\subsubsection{Remark}

According to Section \ref{complexity}, most of the running time is devoted to locate the real root with the largest modulus among all the real roots of the given polynomial. In view of Algorithms \ref{test1} and \ref{test2}, our strategy is to apply Sturm's theorem. Although our method is very simple, it might be not enough efficient. For example, one may adopt some root isolation methods based on Descartes' rule of signs; see \cite{Mehlhorn, Sagraloff} and the references therein.

\subsection{Numerical results}\label{numerical}

In this section, we will present some numerical results concerning dominant integer polynomials. These have been obtained by realizing the algorithms described in Section \ref{algorithm} by using PARI/GP \cite{Pari}. Here, we want to indicate that because of the limited computation resources, we haven't made computations for polynomials of higher degrees.

In order to realize the algorithms successfully in PARI/GP, we should guarantee that all the polynomials arising in the process have rational coefficients. So, we replace the quantity $d_1(f)$ by
$$2^{1-n(n-1)(n-2)/2} (n+1)^{-n(n-1)-1}H(f)^{-2n(n-1)-1} $$
or
$$2^{1-n(n-1)(n-2)/2} (n+1)^{-(n-1)(n-2)-1}H(f)^{-2(n-1)(n-2)-1} ,$$
according to which case we consider, the former one is for the general case, and the latter one is for the case of irreducible polynomials and $n\ge 3$. We also replace $d_2(f)$ with
$$
3H(f)^{1-n} (n+1)^{-n-1}.
$$

To speed up the computations, we apply Algorithm \ref{test2} if $f$ is reducible, and otherwise we apply Algorithm \ref{test3}.

Moreover, one can reduce the computations by using the fact that if $f(X)\in \Z[X]$ is dominant, then $-f(X), f(-X)$ and $-f(-X)$ are also dominant. For example, if $a_0\in \Z$ is fixed, then we have
\begin{align*}
& |\{a_0X^n+a_1X^{n-1}+\cdots +a_n \in S_n^*(H) \}|\\
& =|\{-a_0X^n+a_1X^{n-1}+\cdots +a_n \in S_n^*(H)\}|
\end{align*}
and
\begin{align*}
& |\{a_0X^n+a_1X^{n-1}+\cdots +a_n \in S_n^*(H) : a_1>0\}|\\
& =|\{a_0X^n+a_1X^{n-1}+\cdots +a_n \in S_n^*(H) : a_1<0\}|.
\end{align*}

In order to demonstrate that Algorithms \ref{test2} and \ref{test3} and their realizations are correct, we first perform some computations concerning quadratic integer polynomials and then compare the numerical results with Theorems \ref{Dn} and \ref{D2*}. One can also test quadratic integer polynomials by checking their discriminants and coefficients.

Besides, numerical results about cubic integer polynomials also can reflect the correctness of Algorithms \ref{test2} and \ref{test3} and their realizations.

For integers $n\ge 2$ and $H\ge 1$, let $M_n(H)$ be the proportion of dominant monic  integer polynomials of degree $n$ and height at most $H$ among all the monic integer polynomials of degree $n$ and height at most $H$, that is
$$
M_n(H)=\frac{D_n(H)}{(2H+1)^n}.
$$
By Theorem \ref{Dn}, we have
$$
\lim_{H\to \infty}M_n(H)=\lim_{H\to \infty}D_n(H)/(2H)^n=1.
$$

Similarly, we define
$$
P_n(H)=\frac{D_n^*(H)}{2H(2H+1)^n},
$$
and
$$
Q_n(H)=\frac{|\{f\in S_n^*(H): \textrm{$f$ is irreducible}\}|}{2H(2H+1)^n}.
$$
By Theorem \ref{D2*}, we know that
$$
\lim_{H\to \infty}P_2(H)=\lim_{H\to \infty}D_2^*(H)/(2H)^3=\frac{41+6\log 2}{72}\approx 0.6272.
$$
Theorem \ref{Dn*} implies the rate of growth of the quantity $D_n^{*}(H)$:
$$
H^{n+1}\ll D_n^{*}(H)\ll H^{n+1}.
$$
Noticing further the distribution of reducible polynomials \cite[Theorem 4]{Kuba}, we find that
$$
\lim_{H\to \infty}\frac{Q_n(H)}{P_n(H)}=1,
$$
which implies that
$$
\lim_{H\to \infty}Q_2(H)=\lim_{H\to \infty}D_2^*(H)/(2H)^3=\frac{41+6\log 2}{72}\approx 0.6272.
$$

Table \ref{MPQ2} gives the values of $M_2(H),P_2(H),Q_2(H)$ for various $H$, and it is highly consistent with the above limits. The table also suggests that the above limits can almost be achieved for small $H$, which also can be seen from Table \ref{MPQ3}. This means that to investigate statistical properties of dominant integer polynomials, we might not need to consider polynomials of large height.

Based on Table \ref{P234}, we can conjecture the following

\begin{conjecture}\label{conjecture1}
For any integer $n\ge 2$, $P_n(H)$ is an increasing function with respect to $H$.
\end{conjecture}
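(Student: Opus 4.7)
The approach is to interpret $P_n(H)$ as a scaled lattice-point density and establish monotonic convergence to a geometric limit. Let $\mathcal{D}_n \subset [-1,1]^{n+1}$ denote the set of real coefficient vectors $(b_0,\ldots,b_n)$ with $b_0\ne 0$ for which $b_0X^n+\cdots+b_n$ is dominant. The failure of dominance means either two roots coincide (vanishing discriminant) or two distinct roots have equal maximal modulus; both are codimension-one semi-algebraic conditions, so $\mathcal{D}_n$ is open with boundary of Lebesgue measure zero. Then $P_n(H)$ is the proportion of rescaled integer points $\tfrac{1}{H}(a_0,\ldots,a_n)$ lying in $\mathcal{D}_n$, and standard equidistribution gives $\lim_{H\to\infty}P_n(H)=\mathrm{vol}(\mathcal{D}_n)/2^{n+1}$, which Theorem \ref{Dn*} places strictly between $0$ and $1$.

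The natural plan would be to study $P_n(H+1)-P_n(H)$ as a ``refinement correction,'' arguing that denser lattices approximate the open set $\mathcal{D}_n$ more faithfully. One concrete attempt: construct an injection from the non-dominant polynomials of height $\le H$ to the non-dominant polynomials of height $\le H+1$ whose image occupies a proportionally smaller share, thereby forcing the dominant proportion to grow. A naive candidate is the inclusion map $a_i \mapsto a_i$ (same vector, larger ambient box), but this is too crude because it only yields the weak inequality $D_n^*(H) \le D_n^*(H+1)$, not the required strengthening after division by $2H(2H+1)^n$ versus $2(H+1)(2H+3)^n$.

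The main obstacle is that non-dominance is a \emph{thick} condition containing open regions of positive measure (see Lemmas \ref{family5} and \ref{family6}), so one cannot rely on boundary-effect arguments alone. Unlike convex-body counting where monotonicity follows from set inclusion after rescaling, the rescaled lattices $\tfrac{1}{H}\Z^{n+1}\cap[-1,1]^{n+1}$ are not nested, so monotonicity cannot be read off directly. A plausible fallback is to prove only \emph{eventual} monotonicity (for $H \ge H_0(n)$) via quantitative equidistribution with explicit error term of order $H^{-1}$ for the discrepancy of $\tfrac{1}{H}\Z^{n+1}$ against the semi-algebraic set $\mathcal{D}_n$, and handle small $H$ by direct numerical verification along the lines of Section \ref{numerical}; but even this weaker statement is delicate, since the fluctuations in lattice-point counts near a semi-algebraic boundary can match the order of the gap $P_n(H+1)-P_n(H)$. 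A full proof for all $H\ge 1$ most likely demands a genuinely combinatorial argument specific to the arithmetic of dominant polynomials, perhaps a clever pairing that exploits the coefficient-sign structure exhibited in Lemmas \ref{family1}--\ref{family3}; I do not currently see such a pairing, and view this as the essential difficulty.
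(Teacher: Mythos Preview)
The statement you are attempting to prove is labeled \emph{Conjecture} in the paper, not Theorem; the authors offer no proof whatsoever. They state it purely on the basis of the numerical evidence in Table~\ref{P234} and remark only that it would imply the existence of $\lim_{H\to\infty}P_n(H)$ for $n\ge 3$. So there is no ``paper's own proof'' against which to compare your proposal.

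Your write-up is not a proof either, and you say as much: the equidistribution argument you sketch establishes only the existence of the limit (which is already essentially the content of the surrounding discussion), not monotonicity in $H$; the injection idea you try is, as you note, too weak; and you correctly identify that the non-nesting of the rescaled lattices $\tfrac{1}{H}\Z^{n+1}$ blocks any direct geometric argument. Your diagnosis of the difficulty --- that non-dominance occupies a region of positive measure, so boundary-fluctuation estimates of order $H^{-1}$ are of the same size as the increment $P_n(H{+}1)-P_n(H)$ --- is sound and explains why the conjecture resists the standard lattice-point toolkit. In short, you have written a reasonable research note on why Conjecture~\ref{conjecture1} is open, but not a proof; and the paper does not claim one.
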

This conjecture implies that the limit $\lim_{H\to \infty}P_n(H)$ exists for $n\ge 3$.

According to the numerical results (especially Table \ref{Pn}), we also can make the following conjecture, which roughly says that at least half of integer polynomials are dominant.

\begin{conjecture}\label{conjecture2}
For any integer $n\ge 2$, we have $\limsup_{H\to \infty} P_n(H)>1/2$.
\end{conjecture}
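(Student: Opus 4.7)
My plan is to use an exact pairing argument under the reciprocal involution. For $n=2$, the conjecture is immediate from Theorem~\ref{D2*} since $(41+6\log 2)/72\approx 0.6272 > 1/2$. For $n=3$, I would refine the cubic argument sketched after the proof of Theorem~\ref{Dn*}: the proportion of polynomials in $S_3^*(H)$ with positive discriminant is bounded below by a positive constant in the limit (e.g.\ via Lemma~\ref{family1} or Lemma~\ref{family3}), and such polynomials are dominant away from a negligible subset; meanwhile, among cubics with negative discriminant, Lemma~\ref{Ferguson} forces exactly one of $f$ and $f^*$ to be dominant. Summing these contributions yields $\liminf_{H\to\infty} P_3(H)\ge 1/2+\epsilon$ for some $\epsilon>0$.

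For $n\ge 4$, I would invoke the involution $f\mapsto f^*(X):=X^n f(1/X)$ on the full-density subset of $S_n^*(H)$ defined by $a_0 a_n\ne 0$. Let $q(H)$, $s(H)$, $t(H)$ denote the proportions of $f$ for which both, exactly one, or neither of $f,f^*$ are dominant. Palindromic polynomials (the fixed points of the involution) have density $O(H^{-\lceil n/2\rceil})$, so the middle class partitions into genuine $2$-orbits, each contributing exactly one dominant polynomial. Hence the proportion of dominant polynomials in $S_n^*(H)$ equals
\begin{equation*}
q(H)+\tfrac{1}{2}\,s(H)\;=\;\tfrac{1}{2}+\tfrac{1}{2}\bigl(q(H)-t(H)\bigr),
\end{equation*}
so the conjecture reduces to $\liminf_{H\to\infty}\bigl(q(H)-t(H)\bigr)>0$.

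To get a positive lower bound on $q(H)$, I would apply Lemma~\ref{family2} simultaneously to $f$ and to $f^*$: both polynomials are dominant whenever $|a_1|>n(n+1)^{1/4}|a_0|^{1/2}H^{1/2}$ and $|a_{n-1}|>n(n+1)^{1/4}|a_n|^{1/2}H^{1/2}$ hold. For $n\ge 4$ these two constraints live on disjoint coordinate pairs $(a_0,a_1)$ and $(a_n,a_{n-1})$, so the counting argument of the proof of Theorem~\ref{Dn*} factorises and gives $\liminf_{H\to\infty}q(H)\ge 1/(9n^4(n+1))$.

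The principal obstacle is a matching upper bound on $t(H)$. A polynomial in the ``neither'' class must have both its maximal and its minimal modulus realised by at least two roots, and for even $n$ every polynomial whose roots decompose into $n/2$ complex-conjugate pairs of pairwise distinct moduli automatically lies in $t$, a locus that random-polynomial heuristics suggest carries a substantial positive density. Consequently, establishing $q(H)>t(H)$ seems to require a precise census of coefficient vectors by root configuration rather than the Mahler-type bounds deployed in Section~\ref{trys}. The most promising route in my view is a Kac--Rice-type integration over the coefficient polytope $[-H,H]^{n+1}$ that separates polynomials whose extremal-modulus roots are real from those whose extremal-modulus roots are complex pairs, combined with the root-separation lower bounds of Lemma~\ref{distance} to control the codimension of the ``accidental equal modulus'' loci; this final density comparison appears to go substantially beyond the elementary techniques available in this paper and is, I expect, the crux of any proof.
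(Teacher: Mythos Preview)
The statement is labelled a \emph{conjecture} in the paper and is supported there only by the numerical data in Tables~\ref{P234} and~\ref{Pn}; the paper offers no proof. There is thus nothing in the paper to compare your attempt against, and the question is simply whether your proposal constitutes a proof.

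For $n=2$ it does, immediately from Theorem~\ref{D2*}. For $n=3$ your sketch is essentially correct and sharpens the paper's own remark (after the proof of Theorem~\ref{Dn*}) from ``at least half'' to ``strictly more than half'': the reciprocal involution matches $\Delta<0$ cubics into dominant/non-dominant pairs away from a null set, while almost every $\Delta>0$ cubic is dominant, giving $\liminf_{H\to\infty} P_3(H)\ge\tfrac12+\tfrac12\rho_3$ with $\rho_3$ the limiting proportion of cubics having $\Delta>0$. One small correction: Lemmas~\ref{family1} and~\ref{family3} concern dominance, not the sign of $\Delta$, so they do not by themselves yield $\rho_3>0$; but $\rho_3>0$ is immediate anyway, since $\{\Delta>0\}$ is open and non-empty in the coefficient box.

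For $n\ge 4$ you do not have a proof, and you say so. The identity $P_n=\tfrac12+\tfrac12(q-t)$ under the involution $f\mapsto f^*$ is correct, and your lower bound $\liminf q\ge 1/(9n^4(n+1))$ from Lemma~\ref{family2} is valid. The gap is exactly where you locate it: for even $n$ the ``neither'' class $t$ already contains every polynomial with no real roots, an open region of positive limiting density that there is no reason to expect to fall below $1/(9n^4(n+1))$. So the pairing reduction does not settle the conjecture beyond $n=3$, and any completion would require a much finer comparison of the densities $q$ and $t$ than the tools of Section~\ref{trys} provide. This is consistent with the paper leaving the statement open.
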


\begin{table}
\centering
\caption{Values of $M_2(H),P_2(H),Q_2(H)$ for various $H$}
\label{MPQ2}
\begin{tabular}{|c|c|c|c|c|c|c|c|}
\hline
$H$ & 10 & 30 & 50 & 70 & 90 & 110 & 130\\ \hline

$M_2(H)$ & 0.7664 & 0.8707  &  0.9009 & 0.9169 & 0.9271 & 0.9343 & 0.9397\\ \hline

$P_2(H)$ & 0.5923 & 0.6148 & 0.6195 & 0.6216 & 0.6228 & 0.6236 & 0.6241\\ \hline

$Q_2(H)$ & 0.4508 & 0.5454  & 0.5722  & 0.5849 & 0.5926 & 0.5979 & 0.6016\\ \hline

\end{tabular}
\end{table}

\begin{table}
\centering
\caption{Values of $M_3(H),P_3(H),Q_3(H)$ for various $H$}
\label{MPQ3}
\begin{tabular}{|c|c|c|c|c|c|c|c|}
\hline
$H$ & 10 & 20 & 30 & 40 & 50 & 60 & 70\\ \hline

$M_3(H)$ & 0.7852 & 0.8502  &  0.8779 & 0.8944 & 0.9056 & 0.9139 & 0.9203\\ \hline

$P_3(H)$ & 0.5881 & 0.5993 & 0.6026 & 0.6043 & 0.6053 & 0.6059 & 0.6063\\ \hline

$Q_3(H)$ & 0.4962 & 0.5453  & 0.5640  & 0.5743 & 0.5807 & 0.5850 & 0.5883\\ \hline

\end{tabular}
\end{table}

\begin{table}
\centering
\caption{Values of $P_n(H)$ for $n=2,3,4$ and various $H$}
\label{P234}
\begin{tabular}{|c|c|c|c|c|c|c|c|}
\hline
$H$ & 11 & 12 & 13 & 21 & 22 & 31 & 32   \\ \hline

$P_2(H)$ & 0.5956 & 0.5978  &  0.5998 & 0.6099 & 0.6106 & 0.6152 & 0.6155  \\ \hline

$P_3(H)$ & 0.5904 & 0.5919  &  0.5935 & 0.5997 & 0.6002 & 0.6029 & 0.6031  \\ \hline

$P_4(H)$ & 0.5363 & 0.5376  &  0.5388 & 0.5443 & 0.5447 & 0.5472 & 0.5474 \\ \hline

\end{tabular}
\end{table}

\begin{table}
\centering
\caption{Values of $P_n(H)$ for $n=4,5,6$ and $H=5,10$}
\label{Pn}
\begin{tabular}{|c|c|c|c|c|}
\hline
$H$ & $P_4(H)$ & $P_5(H)$ & $P_6(H)$ \\ \hline

$5$ & 0.5155 & 0.5107  &  0.4947 \\ \hline

$10$ & 0.5345 & 0.5272  &  0.5111 \\ \hline


\end{tabular}
\end{table}

\section*{Acknowledgements}
The authors would like to thank Igor E. Shparlinski for introducing them into this topic, and also for his valuable comments on an early version of this paper. M.~S. also wants to thank the Katana team of UNSW and Xin Zhang for assisting him in computations. The research of A.~D. was supported by the Research Council of Lithuania Grant MIP-068/2013/LSS-110000-740. The research of M.~S. was supported by the Australian Research Council Grant DP130100237.

\end{document}